\newtheorem{thm}{Theorem}[section]
\newtheorem*{thm*}{Theorem}
\newtheorem{lem}[thm]{Lemma}
\newtheorem{prop}[thm]{Proposition}
\newtheorem{cor}[thm]{Corollary}
\newtheorem{dfn}[thm]{Definition}
\newtheorem{ques}[thm]{Question}
\newtheorem{ex}[thm]{Example}
\theoremstyle{remark}
\newtheorem*{rmk}{Remark}
\newcommand{\bs}[1]{\boldsymbol{#1}}
\renewcommand{\bf}[1]{\mathbf{#1}}
\renewcommand{\rm}[1]{\mathrm{#1}}
\newcommand{\cal}[1]{\mathcal{#1}}
\renewcommand{\frak}[1]{\mathfrak{#1}}
\newcommand{\bbN}{\mathbb{N}}
\newcommand{\bbZ}{\mathbb{Z}}
\newcommand{\rmH}{\mathrm{H}}
\newcommand{\T}{\mathrm{T}}
\newcommand{\rmh}{\mathrm{h}}
\newcommand{\m}{\mathrm{m}}
\renewcommand{\P}{\mathcal{P}}
\newcommand{\Q}{\mathcal{Q}}
\newcommand{\X}{\mathcal{X}}
\renewcommand{\O}{\Omega}
\renewcommand{\S}{\Sigma}
\newcommand{\eps}{\varepsilon}
\newcommand{\s}{\sigma}
\renewcommand{\phi}{\varphi}
\renewcommand{\k}{\kappa}
\newcommand{\ul}[1]{\underline{#1}}
\newcommand{\fin}{\nolinebreak\hspace{\stretch{1}}$\lhd$}
\renewcommand{\to}{\longrightarrow}
\renewcommand{\t}{\widetilde}
\newcommand{\lws}{\stackrel{\rm{lw}\ast}{\to}}
\newcommand{\lw}{\stackrel{\rm{lw}\ast}{\to}}
\newcommand{\lde}{\stackrel{\rm{lde}}{\to}}
\renewcommand{\Pr}{\mathrm{Prob}}
\newcommand{\cov}{\mathrm{cov}}
\begin{document}


\title[An AEP for measures on model spaces]{An asymptotic equipartition property for measures on model spaces}
\author{Tim Austin}
\address{Courant Institute of Mathematical Sciences, New York University, 251 Mercer St, New York, NY 10012}
\email{tim@cims.nyu.edu}
\urladdr{www.cims.nyu.edu/$\sim$tim}

\subjclass[2010]{37A35 (primary), 37A50, 28D15, 28D20, 94A17}

\keywords{Sofic entropy, measures on model spaces, asymptotic equipartition property, co-induced dynamical systems}

\begin{abstract}
Let $G$ be a sofic group, and let $\S = (\s_n)_{n\geq 1}$ be a sofic approximation to it.  For a probability-preserving $G$-system, a variant of the sofic entropy relative to $\S$ has recently been defined in terms of sequences of measures on its model spaces that `converge' to the system in a certain sense.  Here we prove that, in order to study this notion, one may restrict attention to those sequences that have the asymptotic equipartition property.  This may be seen as a relative of the Shannon--McMillan theorem in the sofic setting.

We also give some first applications of this result, including a new formula for the sofic entropy of a $(G\times H)$-system obtained by co-induction from a $G$-system, where $H$ is any other infinite sofic group.
\end{abstract}

\maketitle

\section{Introduction}

Let $G$ be a countable sofic group, and let
\[\S = \big(\s_n:G\to \rm{Sym}(V_n)\big)_{n\geq 1}\]
be a sofic approximation to it.

A \textbf{$G$-system} is a triple $(X,\mu,T)$ in which $(X,\mu)$ is a standard probability space and $T = (T^g)_{g\in G}$ is a $\mu$-preserving measurable action of $G$ on $X$.  Using the sofic approximation $\S$, one can construct an isomorphism-invariant of $(X,\mu,T)$ called its sofic entropy.  It is denoted by $\rmh_\S(\mu,T)$.  It was introduced by Lewis Bowen in~\cite{Bowen10} under some extra conditions and then made fully general in~\cite{KerLi11b,KerLi13}.  It solves various classic problems in ergodic theory, such as distinguishing Bernoulli shifts of unequal base entropy over many natural discrete groups~\cite{Bowen10,KerLi11a}.  Bowen's recent survey~\cite{Bowen--survey} gives a thorough introduction to this invariant and explains many directions of current research.

A \textbf{$G$-process} is a $G$-system of the form $(\X^G,\mu,S)$, where $\X$ is another standard measurable space, $S$ is the left coordinate-shift action of $G$ on $\X^G$, and $\mu \in \Pr(\X^G)$ is shift-invariant.  A \textbf{metric $G$-process} is a quadruple $(\X^G,\mu,S,d)$ consisting of a $G$-process and a compact metric $d$ on $\X$ which generates the $\s$-algebra of $\X$ as its Borel sets.

Following~\cite{Aus--soficentadd}, we use a definition of sofic entropy which initially applies only to metric $G$-processes, and is then extended to other systems by isomorphism-invariance.  For a metric $G$-process $(\X^G,\mu,S,d)$, sofic entropy is defined in terms the spaces $\X^{V_n}$, and is abbreviated to $\rmh_\S(\mu)$.  For each element of $\X^{V_n}$, one defines an empirical distribution on $\X^G$ by a kind of `pulling back' under $\s_n$.  An element of $\X^{V_n}$ is a `good model' for $\mu$ if its empirical distribution is close to $\mu$ in the weak$^\ast$ topology of $\Pr(\X^G)$, where $\X^G$ has the product topology resulting from $d$. Loosely, the sofic entropy $\rmh_\S(\mu)$ is defined as the exponential growth rate of the sizes of the sets of these good models.  Here `size' refers to cardinality if $\X$ is finite and to small-radius covering numbers if $(\X,d)$ is compact but not finite.

Starting with~\cite{Bowen11} and~\cite{Aus--soficentadd}, more recent works have considered variants of the sofic entropy which are defined in terms of sequences of measures on the spaces $\X^{V_n}$ that `converge' in a certain sense to the process $\mu$.  In particular,~\cite{Aus--soficentadd} discussed several notions of convergence for these measures, and showed how they play a role in determining when sofic entropy is additive under forming Cartesian products of systems.

The key variant of sofic entropy which appears in~\cite{Aus--soficentadd} is `model measure sofic entropy', denoted here by $\rmh_\S^\m(\mu)$.  This relies on a notion of convergence for a sequence $\mu_n \in \Pr(\X^{V_n})$ to a process $\mu$ which in that paper is called `doubly quenched' convergence.

In the present paper we consider this notion again.  However, here we use a different terminology for this mode of convergence.  We call it `local and doubly empirical' or `LDE' convergence, and denote it by $\mu_n \lde \mu$.  We propose this name change because the name `quenched convergence' could create a serious conflict with older uses of the word `quenched' in statistical physics: this is explained further in Section~\ref{sec:defs}.  Unfortunately this possible conflict did not occur to the author at the time of writing~\cite{Aus--soficentadd}.  Because of this name change, our notation $\rmh_\S^\m$ also differs from~\cite{Aus--soficentadd}: in that paper this quantity is denoted by `$\rmh_\S^{\rm{dq}}$'.

The present paper also uses the notion of local weak$^\ast$ convergence, denoted by $\mu_n \lw \mu$, which is weaker than LDE convergence.  This also played a role in~\cite{Aus--soficentadd}, but it has an older history in probability theory.

The definitions of these modes of convergence and of model-measure sofic entropy are recalled in Section~\ref{sec:defs}.

\subsection{The asymptotic equipartition property}

Our first main result is that one obtains the same value for $\rmh_\S^\m(\mu)$ if one uses only sequences $\mu_n$ which satisfy the asymptotic equipartition property (`AEP') of information theory.  The Shannon--McMillan theorem provides a similar conclusion for the finite marginals of a $\bbZ$-process, but this similarity is rather superficial: in our setting there is generally no canonical choice of a convergent sequence of measures, and we allow ourselves to change the sequence in order to secure the AEP.

To explain the result precisely, suppose first that $(\X^G,\mu,S)$ is a $G$-process with $\X$ finite.  Let $\mu_n$ be any sequence of probability measures on the spaces $\X^{V_n}$, and let $h \in [0,\infty)$.  The sequence $(\mu_n)_n$ has the \textbf{asymptotic equipartition property}, or \textbf{AEP}, with \textbf{rate} $h$ if
\begin{multline*}
\mu_n\big\{\bf{x} \in \X^{V_n}:\ e^{-h|V_n| - \eps|V_n|} < \mu_n\{\bf{x}\} < e^{-h|V_n| + \eps |V_n|}\big\} \to 1\\ \quad \hbox{as}\ n\to\infty\ \forall \eps > 0.
\end{multline*}

Some processes cannot be represented using a finite state space, so those require a generalization of this notion.  The simplest approach uses finite measurable partitions of the state space $\X$, via the following auxiliary definition.  For any measurable space $X$, probability measure $\mu$ on $X$, finite measurable partition $\P$ of $X$, and $\eps > 0$, we define
\[\cov_\eps(\mu;\P) = \min\Big\{|\P_1|:\ \P_1\subseteq \P\ \hbox{and}\ \mu\Big(\bigcup\P_1\Big) > 1-\eps\Big\}.\]
This reduces to the $\eps$-covering number $\cov_\eps(\mu)$ in case $X$ is finite and $\P$ is the partition into singletons.

Now consider a shift-invariant measure $\mu$ on $\X^G$ for an arbitrary standard measurable space $\X$, fix a finite measurable partition $\P$ of $\X$, and let $\mu_n$ be a sequence of members of $\Pr(\X^{V_n})$ as before.  For each $n$, let $\P^{V_n}$ be the partition of $\X^{V_n}$ into products of cells of $\P$. The sequence $(\mu_n)_n$ has the \textbf{AEP rel $\P$ with rate $h$} if
\begin{multline*}
\mu_n\Big(\bigcup\big\{C \in \P^{V_n}:\ e^{-h|V_n| - \eps|V_n|} < \mu_n(C) < e^{-h|V_n| + \eps |V_n|}\big\}\Big) \to 1\\ \quad \hbox{as}\ n\to\infty\ \forall \eps > 0.
\end{multline*}
This reduces to the previous definition if $\X$ is finite and $\P$ is the partition into singletons.

\vspace{7pt}

\noindent\textbf{Theorem A.}\quad \emph{Let $(\X^G,\mu,S,d)$ be a metric $G$-process, let $\P$ be a finite measurable partition of $\X$, and let $\nu_n \in \Pr(\X^{V_n})$ be a sequence such that
\begin{itemize}
 \item[i)] $\nu_n \lde \mu$, and
\item[ii)] we have
\[\frac{1}{|V_n|}\log \cov_\eps(\nu_n;\P^{V_n}) \to h\]
as $n\to\infty$ and then $\eps \downarrow 0$.
\end{itemize}}

\emph{Then there is another sequence $\mu_n \in \Pr(\X^{V_n})$ which has the same two properties and also has the AEP rel $\P$ with rate $h$.}

\vspace{7pt}

This is proved in Section~\ref{sec:A}, following two introductory sections.  In fact we prove a slightly stronger and more precise variant of this theorem: see Theorem~\ref{thm:A+} below.

\subsection{A new formula for model-measure sofic entropy}

Let $(\X^G,\mu,S,d)$ be a metric $G$-process.  Using Theorem A, we can prove a new formula for $\rmh_\S^\m(\mu)$ in terms of finite partitions of $\X$ and associated Shannon entropies.

To do this for a general space $(\X,d)$, we must now allow a whole family $\frak{P}$ of Borel partitions of $\X$, and insist that this family is compatible with $d$ in the following two ways:
\begin{itemize}
	\item[Pi)] For every $\delta > 0$ there exists $\P \in \frak{P}$ all of whose cells have diameter less than $\delta$.
	\item[Pii)] Let $\mu_e$ be the marginal of $\mu$ at the identity element $e$ of $G$. If $\P \in \frak{P}$, then every cell $C \in \P$ is a continuity set for the marginal $\mu_e$: that is,
	\[\mu_e(\partial C) = \mu\{x \in \X^G:\ x_e \in \partial C\} =0.\]
\end{itemize}
For example, if $(\X,d)$ is totally disconnected (such as an abstract Cantor set), then one may take $\frak{P}$ to consist of all partitions into subsets that are both closed and open.  For a general compact metric space $(\X,d)$, a routine argument shows the existence of families satisfying both properties (Pi) and (Pii). Indeed, for any $\delta > 0$ and $x \in \X$, all but countably many radii $r \in (0,\delta)$ must satisfy
\[\mu_e(\partial B(x,r)) = \mu_e\{y\in\X:\ d(x,y) = r\} = 0.\]
Therefore, by compactness, $\X$ has a finite cover consisting of open balls with radii less than $\delta$ and which are all continuity sets for $\mu_e$.  Now consider the Borel partition generated by all these balls and their complements. The cells of this partition are still continuity sets for $\mu_e$ and they all have diameter less than $2\delta$.

Having chosen a family $\frak{P}$ as above, we actually obtain two new formulae for $\rmh_\S^\m(\mu)$.

\vspace{7pt}

\noindent\textbf{Theorem B.}\quad \emph{Let $(\X^G,\mu,S,d)$ be a metric $G$-process and let $\frak{P}$ be any family having properties (Pi) and (Pii) above.  Then
	\begin{align*}
		\rmh_\S^\m(\mu) &= \sup\Big\{\limsup_{i\to\infty} \frac{1}{|V_{n_i}|}\log \cov_\eps(\mu_i;\P^{V_{n_i}}): \eps > 0,\ \P \in \frak{P},\\ & \qquad \qquad \qquad \qquad \qquad \qquad n_i\uparrow \infty \ \hbox{and}\ \mu_i \lde \mu\ \hbox{over}\ (\s_{n_i})_{i\geq 1}\Big\}\\
		&= \sup\Big\{\limsup_{i\to\infty} \frac{1}{|V_{n_i}|}\rmH_{\mu_i}(\P^{V_{n_i}}): \P \in \frak{P},\\ & \qquad \qquad \qquad \qquad \qquad \qquad  n_i\uparrow \infty \ \hbox{and}\ \mu_i \lde \mu\ \hbox{over}\ (\s_{n_i})_{i\geq 1}\Big\}.
	\end{align*}}

\vspace{7pt}

This is proved in Section~\ref{sec:mment}. In the proof, we first show that the two right-hand formulae agree if we fix a single partition $\P$.  This is the argument that needs Theorem A.  Then we complete the proof by showing that the first of these formulae always agrees with $\rmh_\S^\m(\mu)$.  This second step is a fairly routine exercise in regularity properties of Borel measures.  We include both formulae in the statement of Theorem B for completeness, but the second is the main new innovation.

One could formulate and prove the obvious analog of this result for the lower model-measure sofic entropy $\ul{\rmh}_\S^\m(\mu)$ (see~\cite[Section 6]{Aus--soficentadd}), but we leave the details aside.  In a different direction, a little more work should yield a generalization of Theorem B to the case of a Polish space $(\X,d)$ using similar arguments to~\cite{Hay14}, but we do not pursue this here either.

The second formula in Theorem B is useful because it makes available some of the good computational properties of Shannon entropy, such as additivity under products.  The proof of Theorem C below is an example.  Previously, a valuable special case of Theorem B was already given in~\cite[Theorem 4.1]{Bowen11}, where it was used to calculate the sofic entropies of certain actions of algebraic origin.  Another use of Shannon entropies to provide lower bounds for covering numbers in this context appeared recently in~\cite{AusBur--CPSE}.

In case $\X$ is a finite set, the best choice of $\P$ in either of the formulae in Theorem B is the partition into singletons.  From this choice we obtain the following corollary.

\vspace{7pt}

\noindent\textbf{Corollary B$'$.}\quad \emph{If $\X$ is finite, then
	\begin{align*}
		\rmh_\S^\m(\mu) &= \sup\Big\{\limsup_{i\to\infty} \frac{1}{|V_{n_i}|}\log \cov_\eps(\mu_i): \eps > 0,\\ & \qquad \qquad \qquad \qquad \qquad \qquad n_i\uparrow \infty \ \hbox{and}\ \mu_i \lde \mu\ \hbox{over}\ (\s_{n_i})_{i\geq 1}\Big\}\\
		&= \sup\Big\{\limsup_{i\to\infty} \frac{1}{|V_{n_i}|}\rmH(\mu_i):\ n_i\uparrow \infty \ \hbox{and}\ \mu_i \lde \mu\ \hbox{over}\ (\s_{n_i})_{i\geq 1}\Big\}.
	\end{align*}
	$\phantom{i}$\qed}

\vspace{7pt}

The first of these formulae already appears in~\cite[Proposition 8.1]{Aus--soficentadd}.

Section~\ref{sec:mment} also includes examples showing that hypotheses (Pi) and (Pii) on $\frak{P}$ are not superfluous in Theorem B.

\begin{rmk}
	Although a given partition $\P$ of $\X$ generates a factor of the $G$-system $(\X^G,\mu,S)$, the expressions on the right in Theorem B are not simply the sofic entropy of that factor.  Indeed, those expressions still depend on the entire metric process $(\X^G,\mu,S,d)$ because of the requirement that the measures satisfy $\mu_i \lde \mu$.  In the setting of non-amenable groups, it is known that both sofic entropy and model-measure sofic entropy can increase under factor maps, so the sofic entropy of a factor should not appear inside a supremum such as either right-hand side in Theorem B. \fin
\end{rmk}

\subsection{The entropy of certain co-induced systems}\label{subs:coind}

Now let $H$ be another sofic group with a sofic approximation $\T= (\tau_n)_{n \geq 1}$, where
\[\tau_n:H\to \rm{Sym}(W_n).\]
Then the product group $G\times H$ has a product sofic approximation $\S\times \rm{T}$ consisting of the Cartesian product maps
\[\s_n\times\tau_n:G\times H\to \rm{Sym}(V_n\times W_n):(g,h)\mapsto \s_n^g\times \tau_n^h.\]

If $(X,\mu,T)$ is a $G$-system, then the space $(X^H,\mu^{\times H})$ supports natural commuting actions of $G$ and $H$: $G$ acts diagonally through copies of $T$, and $H$ acts by coordinate left-translation.  Together these define a measure-preserving action of $G\times H$.  It is called the \textbf{co-induction} of $T$ to $G\times H$ and denoted by $\rm{CInd}_G^{G\times H}T$.  If $(X,\mu,T)$ is a $G$-process $(\X^G,\mu,S)$, then its co-induction is the $(G\times H)$-process $(\X^{G\times H},\mu^{\times H},S)$, where we use the same letter $S$ to denote the left coordinate-shift action on either of these spaces.  Co-induction can be defined more generally given any containment of two discrete groups, but we do not discuss that generality here.

In Section~\ref{sec:coind} we apply Theorem B to deduce a general formula for the behaviour of model-measure sofic entropy under co-induction of systems.

\vspace{7pt}

\noindent\textbf{Theorem C.}\quad \emph{Let $(\X^G,\mu,S,d)$ be a metric $G$ process and let $\frak{P}$ be a family of finite Borel partitions of $\X$ having properties (Pi) and (Pii).  Assume that the sofic approximation $\rm{T}$ satisfies $|W_n|\to\infty$ as $n\to\infty$. Then
\begin{multline*}
\rmh^\m_{\S\times \rm{T}}(\mu^{\times H}) = \sup\Big\{\limsup_{i\to\infty} \frac{1}{|V_{n_i}|}\rmH_{\mu_i}(\P^{V_{n_i}}):\
\P \in \frak{P},\ n_i\uparrow \infty \ \hbox{and}\\ \mu_i \lw \mu\ \hbox{over}\ (\s_{n_i})_{i\geq 1}\Big\}.
\end{multline*}}

\vspace{7pt}

This theorem gives a formula for the model-measure sofic entropy of the co-induced system $(\X^{G\times H},\mu^{\times H},S,d)$ in terms of convergent measures for the original system $(\X^G,\mu,S,d)$. It looks very similar to the second formula in Theorem B, but it has the important difference that LDE convergence has been weakened to local weak$^\ast$ convergence on the right-hand side.

The condition that $|W_n| \to \infty$ is clearly not superfluous. Without it, we could simply let $H$ be the trivial group and let each $W_n$ be a singleton, in which case $\rmh^\m_{\S\times \rm{T}}(\mu^{\times H}) = \rmh^\m_\S(\mu)$.  This is certainly not always equal to the right-hand side above, because of the difference between local weak$^\ast$ and LDE convergence. Indeed, there may exist a sequence $\mu_i \lw \mu$ even in cases where $\rmh_\S^\m(\mu) = -\infty$: see Example~\ref{ex:lw-bad} below.

Theorem C is clarified by some important special cases.  To discuss these, let us write $\t{\rmh}_\S(\mu)$ for the right-hand formula in Theorem C.

First, if $H$ is infinite, then any sofic approximation must have $|W_n|\to\infty$.  In that case,~\cite[Theorem E]{Aus--soficentadd} has already shown another entropy equality for the co-induced system: that $\rmh_{\S\times \rm{T}}(\mu^{\times H}) = \rmh_{\S\times \rm{T}}^\m(\mu^{\times H})$.  So in this case we obtain the following.

\vspace{7pt}

\noindent\textbf{Corollary C$_{\bs{1}}$.}\quad \emph{If $H$ is infinite, then
\[\rmh_{\S\times \rm{T}}(\mu^{\times H}) = \rmh^\m_{\S\times \rm{T}}(\mu^{\times H}) = \t{\rmh}_\S(\mu).\] \qed}

\vspace{7pt}

Secondly, one can let $H$ be the trivial group, and so identify $G\times H$ with $G$.  Then the maps $\tau_n$ in the sofic approximation $\rm{T}$ are also trivial, but the finite sets $W_n$ are arbitrary.  Since we assume that $|W_n|\to\infty$, this really amounts to replacing $\S$ with a new, enlarged sofic approximation $\t{\S}$ to $G$ defined by
\[\t{\s}_n^g := \s_n^g\times \rm{Id}_{W_n} \in \rm{Sym}(V_n\times W_n).\]
Equivalently, $\t{\s}_n$ is a direct sum of several copies of $\s_n$, their number tending to $\infty$ with $n$.  In this case, Theorem C tells us that this enlargement of the sofic approximation enables a new formula for model-measure sofic entropy.

\vspace{7pt}

\noindent\textbf{Corollary C$_{\bs{2}}$.}\quad \emph{In the situation described above, we have
\[\rmh^\m_{\t{\S}}(\mu) = \t{\rmh}_\S(\mu).\] \qed}

\vspace{7pt}

An immediate consequence of either Corollary C$_1$ or Corollary C$_2$ is that the quantity $\t{\rmh}_\S(\mu)$ is invariant under measure-theoretic isomorphisms of processes.  Its definition may therefore be extended unambiguously to any $G$-system $(X,\mu,T)$ by picking an arbitrary isomorphism from that system to a metric $G$-process and picking a suitable family $\frak{P}$.  We write $\t{\rmh}_\S(\mu,T)$ for the quantity obtained this way.

However, this invariant of $G$-systems is often not new.  If the metric $G$-process $(\X^G,\mu,S,d)$ is weakly mixing, then local weak$^\ast$ convergence implies LDE convergence~\cite[Corollary 5.7 and Lemma 5.15]{Aus--soficentadd}, and then $\t{\rmh}_\S(\mu)$ does coincide with the second formula in Theorem B.  This gives our third special case of Theorem C.

\vspace{7pt}

\noindent\textbf{Corollary C$_{\bs{3}}$.}\quad \emph{If $(X,\mu,T)$ is a weakly mixing $G$-system then
\[\rmh^\m_{\S\times \rm{T}}(\mu^{\times H},\rm{CInd}_G^{G\times H}T) = \rmh_\S^\m(\mu,T).\]
This is equal to $\rmh_{\S\times \rm{T}}(\mu^{\times H},\rm{CInd}_G^{G\times H}T)$ in case $H$ is infinite. \qed}

\vspace{7pt}

I do not know whether Corollary C$_3$ has a natural variant in case $\mu$ is ergodic but not weakly mixing.

In light of Corollary C$_1$, the quantity $\t{\rmh}_\S(\mu,T)$ may be viewed as a kind of `stabilized' sofic entropy.  We can obtain $\t{\rmh}_\S(\mu,T)$ by choosing any other infinite sofic group --- $\bbZ$, for example --- and then using the sofic entropy of the co-induced system
\[\big(X^\bbZ,\mu^{\times \bbZ},\rm{CInd}_G^{G\times \bbZ}T\big).\]
Alternatively, by Corollary C$_2$, we may retain the group $G$ but enlarge the sofic approximation to $\t{\S}$. In particular, either approach gives a new formula for $\rmh_\S^\m(\mu,T)$ in case $(X,\mu,T)$ is weakly mixing, by Corollary C$_3$. Much like the relation to power-stabilized sofic entropy established in~\cite[Section 8]{Aus--soficentadd}, this gives a meaning to model-measure sofic entropy which does not refer to any sequences of measures on model spaces.  It is restricted to weakly mixing $G$-systems, but avoids another problem with the results of~\cite[Section 8]{Aus--soficentadd}: those are presently known only for systems with finite generating partitions.

\subsection{Next steps}

A few remaining open problems are scattered below.  However, another problem which deserves mention here is to generalize any of the results above to the relative setting: that is, so that they apply to a notion of relative sofic entropy for a factor map $(X,\mu,T) \to (Y,\nu,S)$ between two $G$-systems.

\section{Background on LDE convergence and model-measure sofic entropy}\label{sec:defs}

Aside from our new terminology for LDE convergence, our formalism and notation essentially agree with those of~\cite{Aus--soficentadd}.  This section simply recalls the main definitions.  More discussion can be found in that paper.

Let $G$ be a countable sofic group and let $\S = (\s_n)_n$ be a sofic approximation to it, where $\s_n:G\to \rm{Sym}(V_n)$.  Let $(\X^G,\mu,S,d)$ be a metric $G$-process.  We endow each of the spaces $\X^{V_n}$ with the compact product topology, and more specifically with the \textbf{Hamming average} $d^{(V_n)}$ of the metric $d$, defined by
\[d^{(V_n)}(\bf{x},\bf{y}) := \frac{1}{|V_n|}\sum_{v \in V_n}d(x_v,y_v).\]

Given $\bf{x} \in \X^{V_n}$ and $v \in V_n$, the \textbf{pullback name of $\bf{x}$ at $v$} is the element
\[\Pi^{\s_n}_v(\bf{x}) = (x_{\s_n^g(v)})_{g\in G} \in \X^G.\]
The \textbf{empirical distribution} of $\bf{x}$ is
\[P^{\s_n}_\bf{x} = \frac{1}{|V_n|}\sum_{v \in V_v}\delta_{\Pi^{\s_n}_v(\bf{x})} \in \Pr(\X^G).\]

If $U$ is a weak$^\ast$ neighbourhood of $\mu$ in $\Pr(\X^G)$, then the \textbf{$U$-good models} for $\mu$ over $\s_n$ are the elements of the set
\[\O(U,\s_n) = \big\{\bf{x} \in \X^{V_n}:\ P^{\s_n}_\bf{x} \in U\big\}.\]

The \textbf{sofic entropy of $\mu$ rel $\S$} is the quantity
\[\rmh_\S(\mu) := \sup_{\delta > 0}\inf_U\limsup_{n\to\infty}\frac{1}{|V_n|}\log \cov_\delta\big(\O(U,\s_n),d^{(V_n)}\big),\]
where the infimum is over all weak$^\ast$ neighbourhoods $U$ of $\mu$, and
\[\cov_\delta\big(\O(U,\s_n),d^{(V_n)}\big)\]
denotes the least number of $\delta$-balls in the metric $d^{(V_n)}$ needed to cover $\O(U,\s_n)$.  See~\cite{Aus--soficentadd} for a comparison of this with older definitions of sofic entropy.

Now suppose in addition that $\mu_n \in \Pr(\X^{V_n})$ for each $n$.

\begin{dfn}
The sequence $(\mu_n)_n$ \textbf{locally weak$^\ast$ converges to $\mu$ over $\S$} if
\[\big|\big\{v \in V_n:\ (\Pi^{\s_n}_v)_\ast\mu_n \in U\big\}\big| = (1 - o(1))|V_n|\]
for every weak$^\ast$ neighbourhood $U$ of $\mu$.  This is denoted by $\mu_n \lw \mu$.
\end{dfn}

The next definition was the first main innovation of~\cite{Aus--soficentadd}, but here it has its new name.

\begin{dfn}
The sequence $(\mu_n)_n$ \textbf{locally and empirically converges} (or `\textbf{LE-converges}') {to $\mu$ over $\S$} if
\begin{itemize}
\item[i)] $\mu_n \lw \mu$, and also
\item[ii)] $\mu_n(\O(U,\s_n))\to 1$ for every weak$^\ast$ neighbourhood $U$ of $\mu$.
\end{itemize}
We denote this by $\mu_n \stackrel{\rm{le}}{\to} \mu$.

The sequence $(\mu_n)_n$ \textbf{locally and doubly empirically converges} (or `\textbf{LDE converges}') {to $\mu$ over $\S$} if $\mu_n \times \mu_n \stackrel{\rm{le}}{\to} \mu \times \mu$ over $\S$.  We denote this by $\mu_n \lde \mu$.
\end{dfn}

Section 5 of~\cite{Aus--soficentadd} is devoted to these notions of convergence. In that paper, LE-convergence is called `quenched convergence', and LDE convergence is called `doubly-quenched convergence'.  This usage was motivated by the use of the word `quenched' in statistical physics: it indicates a property that holds with high probability as some thermodynamic or other limit is approached, rather than just on average.  However, I now realize that my previous terminology could create a serious conflict. In case $G$ is a free group, it is common and natural to produce sofic approximations to it randomly (see, for instance,~\cite{Bowen10c}).  In this case, if we are to use `quenched' as in statistical physics, it should be applied to the random choice \emph{of sofic approximation}, not to the choice of model given a fixed sofic approximation.  This is the why I have switched to the new names above for these modes of convergence for the measures $\mu_n$.

Finally, the \textbf{model-measure sofic entropy of $(\X^G,\mu,S,d)$ rel $\S$} is the quantity
\begin{multline*}
	\rmh_\S^\m(\mu) = \sup\Big\{\sup_{\delta,\eps > 0} \limsup_{i\to\infty} \frac{1}{|V_{n_i}|}\log \cov_{\eps,\delta}(\mu_i,d^{(V_{n_i})}):\\ n_i\uparrow \infty \ \hbox{and}\ \mu_i \lde \mu\ \hbox{over}\ (\s_{n_i})_{i\geq 1}\Big\},
\end{multline*}
where $\cov_{\eps,\delta}(\mu_i,d^{(V_{n_i})})$ denotes the least number of $\delta$-balls in the metric $d^{(V_{n_i})}$ needed to cover more than $1-\eps$ of the measure $\mu_i$.

Model-measure sofic entropy is defined and studied in~\cite[Section 6]{Aus--soficentadd} (but beware that it also has a different notation: it is denoted by `$\rmh_\S^{\rm{dq}}(\mu)$' in~\cite{Aus--soficentadd}).  According to~\cite[Theorem 6.4]{Aus--soficentadd}, it is an invariant of the process under measure-theoretic isomorphism, so in particular it does not depend on the choice of $d$, and can be defined unambiguously for any $G$-system.

\section{A stability result for LDE convergence}

The new sequence of measures $\mu_n$ given by Theorem A is obtained by conditioning: this is made explicit in the refined statement of Theorem~\ref{thm:A+} below.  It is crucial for the proof that conditioning on sets which are not too small cannot disrupt LDE convergence.  We state and prove this as a separate theorem since it has independent interest and may find other applications elsewhere.

Fix a metric $G$-process $(\X^G,\mu,S,d)$.

\begin{thm}[Stability of LDE convergence under conditioning]\label{thm:stability}
	Suppose that ${\mu_n \lde \mu}$, and let $A_n \subseteq \X^{V_n}$ be subsets which all satisfy $\mu_n(A_n) \geq a$ for some $a > 0$. Then also $\mu_n(\,\cdot\,|\,A_n) \lde \mu$.
\end{thm}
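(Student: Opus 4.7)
The plan is to verify directly the two conditions defining the LE-convergence $\nu_n \times \nu_n \stackrel{\rm{le}}{\to} \mu \times \mu$, where $\nu_n := \mu_n(\,\cdot\,|\,A_n)$; by definition this is exactly what $\nu_n \lde \mu$ says. The ``good models'' half is easy: for any weak$^\ast$ neighbourhood $U$ of $\mu \times \mu$, the LDE hypothesis on $(\mu_n)_n$ gives $(\mu_n \times \mu_n)(\O(U,\s_n)^c) = o(1)$, hence
\[
(\nu_n \times \nu_n)(\O(U,\s_n)) \;=\; \frac{(\mu_n \times \mu_n)(\O(U,\s_n) \cap (A_n \times A_n))}{\mu_n(A_n)^2} \;\geq\; 1 - \frac{o(1)}{\mu_n(A_n)^2} \;\to\; 1,
\]
where the final step uses $\mu_n(A_n) \geq a$.

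The main task is the local weak$^\ast$ statement $\nu_n \lw \mu$, which upgrades automatically to $\nu_n \times \nu_n \lw \mu \times \mu$ since $(\Pi^{\s_n}_v)_\ast(\nu_n \times \nu_n) = ((\Pi^{\s_n}_v)_\ast\nu_n)^{\times 2}$ and $\alpha \mapsto \alpha\times\alpha$ is weak$^\ast$-continuous. The difficulty is that conditioning by $A_n$ does not commute with the pushforward $\Pi^{\s_n}_v$, so one cannot simply transport $\mu_n \lw \mu$ to $\nu_n$ pointwise in $v$. I propose a second moment argument. Fix a cylinder test function $f \in C(\X^G)$ and set $I_n(v) := \int f\,d(\Pi^{\s_n}_v)_\ast\nu_n$; the goal is $I_n(v) \to \int f\,d\mu$ in probability as $v$ is sampled uniformly from $V_n$. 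Using the identity $\frac{1}{|V_n|}\sum_v (\Pi^{\s_n}_v)_\ast \pi = \int P^{\s_n}_{\bf{z}}\,d\pi(\bf{z})$ for any $\pi$ on $\X^{V_n}$, the first and second sample moments become
\[
\frac{1}{|V_n|}\sum_v I_n(v) = \int\!\Big(\int\! f\,dP^{\s_n}_{\bf{x}}\Big)\,d\nu_n(\bf{x}), \qquad \frac{1}{|V_n|}\sum_v I_n(v)^2 = \int\!\Big(\int\! F\,dP^{\s_n}_{(\bf{x},\bf{y})}\Big)\,d(\nu_n\times\nu_n),
\]
where $F(x,y) := f(x)f(y)$ is a cylinder function on $(\X\times\X)^G$. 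Unfolding the definition of $\nu_n$, the first equals $\frac{1}{\mu_n(A_n)}\int 1_{A_n}(\bf{x})\int f\,dP^{\s_n}_{\bf{x}}\,d\mu_n(\bf{x})$. Since $\mu_n \lde \mu$ implies by taking marginals the ordinary LE-convergence $\mu_n \stackrel{\rm{le}}{\to} \mu$, the bounded function $\bf{x} \mapsto \int f\,dP^{\s_n}_{\bf{x}}$ tends to $\int f\,d\mu$ in $\mu_n$-probability; combined with $\mu_n(A_n) \geq a$ and bounded convergence this yields $\frac{1}{|V_n|}\sum_v I_n(v) \to \int f\,d\mu$. An analogous argument applied to $F$ on $(\X\times\X)^G$, using the full LDE hypothesis directly and $(\mu_n\times\mu_n)(A_n\times A_n) \geq a^2$, gives $\frac{1}{|V_n|}\sum_v I_n(v)^2 \to \big(\int f\,d\mu\big)^2$. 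Hence the empirical variance of $I_n(\cdot)$ over uniform $v$ vanishes, and Chebyshev supplies the required convergence in probability.

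A basic weak$^\ast$ neighbourhood of $\mu$ is specified by finitely many cylinder functions, so a union bound over these promotes the per-function convergence to $\nu_n \lw \mu$, completing the proof. The chief obstacle is really the local weak$^\ast$ step: once we condition by $A_n$, no pointwise-in-$v$ control on $(\Pi^{\s_n}_v)_\ast \nu_n$ survives, and only the LDE hypothesis' control on \emph{pairs} of empirical samples is strong enough to bound the second moment of $I_n(v)$, and hence its typical fluctuation.
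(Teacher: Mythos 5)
Your argument is correct, and both halves do what they need to do: the good-models part is identical to the paper's, and your local weak$^\ast$ part closes the same gap the paper identifies (conditioning does not commute with the pushforwards $\Pi^{\s_n}_v$) using the same underlying mechanism, namely that only the hypothesis on \emph{pairs} gives second-moment control on the random measure $(\Pi^{\s_n}_v)_\ast\nu_n$. The execution differs in packaging. The paper forms the empirical distribution $\theta_n = \frac{1}{|V_n|}\sum_v \delta_{(\Pi^{\s_n}_v)_\ast\nu_n}$ in $\Pr(\Pr(\X^G))$, passes to a subsequential weak$^\ast$ limit $\theta$ via Alaoglu, shows $\int \nu\times\nu\,\theta(d\nu) = \mu\times\mu$, and invokes a barycentre rigidity lemma (its Lemma~\ref{lem:averaging}: zero variance of $\nu(A)$ under $\theta$ forces $\theta = \delta_\mu$). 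You instead fix a continuous test function $f$, compute the first and second empirical moments of $I_n(v) = \int f\,d(\Pi^{\s_n}_v)_\ast\nu_n$ over uniform $v$ directly at finite $n$ via the identity $\frac{1}{|V_n|}\sum_v(\Pi^{\s_n}_v)_\ast\pi = \int P^{\s_n}_{\bf{z}}\,d\pi(\bf{z})$, and apply Chebyshev plus a union bound over the finitely many test functions defining a basic neighbourhood. Your route is more elementary in that it avoids the compactness argument in $\Pr(\Pr(\X^G))$ and the auxiliary lemma, and it is quantitative at each $n$; the paper's route avoids choosing test functions and states the conclusion for arbitrary weak$^\ast$ neighbourhoods in one stroke. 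The final upgrade from $\nu_n\lw\mu$ to $\nu_n\times\nu_n\lw\mu\times\mu$ via continuity of $\alpha\mapsto\alpha\times\alpha$ is the same in both.
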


The proof of this rests on the following elementary lemma.

\begin{lem}\label{lem:averaging}
Let $X$ be a compact metrizable space and let $\Pr(X)$ be the space of all Borel probability measures on it, endowed with the weak$^\ast$ topology.  Let $\theta$ be a probability distribution on $\Pr(X)$, and let $\mu$ be its barycentre, meaning that
\begin{equation}\label{eq:bary}
\mu(A) = \int \nu(A)\,\theta(d\nu) \quad \forall \ \rm{Borel}\ A \subseteq X. 
\end{equation}
Suppose in addition that
\begin{equation}\label{eq:squares-bary}
\mu\times \mu = \int \nu\times \nu\ \theta(d\nu).
\end{equation}
Then $\theta$ is the point mass at $\mu$.
\end{lem}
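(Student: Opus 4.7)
The plan is to exploit the identity~\eqref{eq:squares-bary} by testing it against product functions of the form $f \otimes f$, which will force a variance to vanish.

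First I would choose any continuous $f\in C(X)$ and define the affine continuous evaluation map
\[
\phi_f : \Pr(X) \to \bbR, \qquad \phi_f(\nu) := \smint f\, d\nu.
\]
Integrating $f$ against~\eqref{eq:bary} gives $\int \phi_f(\nu)\, \theta(d\nu) = \phi_f(\mu)$, which is just the definition of barycentre in dual form. Then I would integrate the function $f\otimes f \in C(X\times X)$ against~\eqref{eq:squares-bary}; since $\int f\otimes f\, d(\nu\times \nu) = \phi_f(\nu)^2$ (and similarly on the $\mu\times\mu$ side), this yields
\[
\smint \phi_f(\nu)^2\, \theta(d\nu) = \phi_f(\mu)^2 = \Big(\smint \phi_f(\nu)\, \theta(d\nu)\Big)^2.
\]
Thus the variance of $\phi_f$ under $\theta$ vanishes, so $\phi_f(\nu) = \phi_f(\mu)$ for $\theta$-a.e.\ $\nu\in \Pr(X)$.

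To promote this to the conclusion that $\nu = \mu$ almost surely, I would invoke the separability of $C(X)$, which holds because $X$ is compact metrizable. Fix a countable dense subset $\{f_k\}_{k\geq 1}\subseteq C(X)$. For each $k$, the preceding step gives a $\theta$-conull set $N_k$ on which $\phi_{f_k}(\nu) = \phi_{f_k}(\mu)$. The intersection $N := \bigcap_k N_k$ is still $\theta$-conull, and on $N$ we have $\int f_k\, d\nu = \int f_k\, d\mu$ for every $k$. By density in $C(X)$ and the Riesz representation theorem, this forces $\nu = \mu$ on $N$, so $\theta$ is concentrated at the single point $\mu$.

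The main (very mild) obstacle is purely formal: making sure that the passage from ``$\int \phi_f\, d\theta = \phi_f(\mu)$ for all $f\in C(X)$'' to the Borel identity~\eqref{eq:bary}, and likewise the corresponding dual form of~\eqref{eq:squares-bary}, is justified. Both directions are standard consequences of the Riesz representation theorem on the compact metrizable spaces $X$ and $X\times X$, so no serious work is required; the real content of the argument is the variance computation above.
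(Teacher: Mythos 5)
Your proof is correct and follows essentially the same route as the paper: both exploit the second-moment identity from~\eqref{eq:squares-bary} to show that a variance vanishes, forcing $\theta$-a.e.\ $\nu$ to agree with $\mu$ on a countable determining family. The only cosmetic difference is that you test against a countable dense set of continuous functions, whereas the paper tests against indicators of a countable generating family of Borel sets.
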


\begin{proof}
If $A \subseteq X$ is Borel, then~(\ref{eq:squares-bary}) implies that
\[\mu(A)^2 = \int \nu(A)^2\,\theta(d\nu).\]
In combination with~(\ref{eq:bary}), this is possible only if $\nu(A) = \mu(A)$ for $\theta$-a.e. $\nu$.  Letting $A$ run through a countable sequence which generates the Borel $\s$-algebra of $X$, it follows that $\theta(\{\mu\}) = 1$.
\end{proof}

\begin{proof}[Proof of Theorem~\ref{thm:stability}]
Let $\nu_n := \mu_n(\,\cdot\,|\,A_n)$ for each $n$.

\vspace{7pt}
	
\emph{Part 1: support on good models}.\quad Our assumptions include that ${\mu_n \stackrel{\rm{le}}{\to} \mu}$, so for any weak$^\ast$ neighbourhood $U$ of $\mu$ in $\Pr(\X^G)$ we have
\begin{align*}
\nu_n(\O(U,\s_n)) &= \frac{\mu_n(\O(U,\s_n)\cap A_n)}{\mu_n(A_n)}\\ &\geq \frac{\mu_n(A_n) - \mu_n(\X^{V_n}\setminus \O(U,\s_n))}{\mu_n(A_n)}\\
&\geq 1 - \frac{\mu_n(\X^{V_n}\setminus \O(U,\s_n))}{a}\\
&\to 1 \quad \hbox{as}\ n\to\infty.
\end{align*}
So $\nu_n$ is asymptotically supported on good models for $\mu$.

Using the full assumption that $\mu_n \lde \mu$, we apply the same reasoning to the measures $\mu_n\times \mu_n$ and sets $A_n\times A_n$.  It follows that $\nu_n\times \nu_n$ is asymptotically supported on good models for $\mu\times \mu$.

\vspace{7pt}

\emph{Part 2: local weak$^\ast$ convergence}.\quad Next we show that $\nu_n \lws \mu$.  For each $v \in V_n$, let
\[\nu_{n,v} := (\Pi^{\s_n}_v)_\ast\nu_n \in \Pr(\X^G).\]
Let $\theta_n$ be the uniform distribution on this collection of elements of $\Pr(\X^G)$: that is,
\[\theta_n := \frac{1}{|V_n|}\sum_{v \in V_n}\delta_{\nu_{n,v}} \in \Pr(\Pr(\X^G)).\]

The desired local weak$^\ast$ convergence asserts that
\[\theta_n(U) = \frac{|\{v \in V_n:\ \nu_{n,v} \in U\}|}{|V_n|} \to 1 \quad \hbox{as}\ n\to\infty\]
for any weak$^\ast$ neighbourhood $U$ of $\mu$.  This is equivalent to the convergence
\[\theta_n \stackrel{\rm{weak}^\ast}{\to} \delta_\mu,\]
where this refers to convergence in the weak$^\ast$ topology of $\Pr(\Pr(\X^G))$.

That topology is compact, by Alaoglu's theorem, so it suffices to prove that $\delta_\mu$ is the only subsequential weak$^\ast$ limit of the sequence $(\theta_n)_n$.  Let $\theta = \lim_{i\to\infty} \theta_{n_i}$ be any such subsequential limit.  Since the formation of product measures is continuous for the weak$^\ast$ topology, it follows that
\[\int \nu\times \nu\ \theta(d\nu) = \lim_{i\to\infty} \int \nu\times \nu\ \theta_{n_i}(d\nu) = \lim_{i\to\infty} \frac{1}{|V_{n_i}|}\sum_{v \in V_{n_i}}\nu_{n_i,v}\times \nu_{n_i,v}.\]
The average inside this last limit may be re-arranged as
\begin{align*}
\frac{1}{|V_{n_i}|}\sum_{v \in V_{n_i}}\nu_{n_i,v}\times \nu_{n_i,v} &= \frac{1}{|V_{n_i}|}\sum_{v \in V_{n_i}}(\Pi^{\s_{n_i}}_v)_\ast(\nu_{n_i}\times \nu_{n_i}) \\ &= \iint P^{\s_n}_{(\bf{x},\bf{x}')}\ \nu_{n_i}(d\bf{x})\,\nu_{n_i}(d\bf{x}').
\end{align*}
However, Part 1 has shown that this last integral converges to $\mu\times \mu$.  Therefore $\theta$ must satisfy
\[\int \nu\times \nu\ \theta(d\nu) = \mu\times \mu,\]
and so $\theta = \delta_\mu$, by Lemma~\ref{lem:averaging}.

This shows that $\nu_n \lws \mu$, and for local weak$^\ast$ convergence this implies immediately that $\nu_n\times \nu_n \lws \mu\times \mu$.
\end{proof}

\section{Obtaining sequences with the AEP}\label{sec:A}

This section proves Theorem A and some related results.  In fact we formulate and prove a version which gives a slightly stronger conclusion and which describes how the new measures are obtained.

For the statement, let $(\X^G,\mu,S)$, $\P$, and $(\nu_n)_n$ be as in the statement of Theorem A, satisfying the properties (i) and (ii) listed there.  We say that a sequence $\mu_n \in \Pr(\X^{V_n})$ has the \textbf{strong AEP rel $\P$ with rate $h$} if for every $\eps > 0$ we have
\[\mu_n\Big(\bigcup\big\{C \in \P^{V_n}:\ e^{-h|V_n| - \eps|V_n|} < \mu_n(C) < e^{-h|V_n| + \eps |V_n|}\big\}\Big) = 1\]
for all sufficiently large $n$.

\begin{thm}[Stronger version of Theorem A]\label{thm:A+}
	There are measurable sets $A_n\subseteq \X^{V_n}$ with $\nu_n(A_n) \geq e^{-o(n)}$ such that the sequence of conditional measures
	\[\mu_n := \nu_n(\,\cdot\,|\,A_n)\]
	has properties (i) and (ii) and also has the strong AEP rel $\P$ with rate $h$.
\end{thm}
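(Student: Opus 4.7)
The plan is to take $A_n$ as a union of $\P^{V_n}$-cells whose $\nu_n$-measure lies in a narrow window around $e^{-h|V_n|}$. Precisely, for $\eta > 0$ set
\[A_n^\eta := \bigcup\big\{C \in \P^{V_n}:\ e^{-(h+\eta)|V_n|} < \nu_n(C) < e^{-(h-\eta)|V_n|}\big\},\]
and take $A_n := A_n^{\eta_n}$ for an appropriate sequence $\eta_n \downarrow 0$. Provided that $\nu_n(A_n) \ge e^{-o(|V_n|)}$, the conditional measures $\mu_n := \nu_n(\,\cdot\,|\,A_n)$ assign every cell $C$ of $A_n$ a mass $\mu_n(C) = \nu_n(C)/\nu_n(A_n)$ trapped in a window around $e^{-h|V_n|}$ whose log-width divided by $|V_n|$ tends to $0$. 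The strong AEP rel $\P$ at rate $h$ then follows at once (each $\mu_n$ is fully supported on $A_n$), and property (ii) drops out because the $\mu_n$-support decomposes into $e^{(h+o(1))|V_n|}$ cells of nearly equal $\mu_n$-mass.

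The crux is to show that for each fixed $\eta > 0$ one has $\nu_n(A_n^\eta) \ge c(\eta) > 0$ for all large $n$, which I would derive from hypothesis~(ii) via two one-sided estimates. Let $M_-(n,\eta)$ denote the total $\nu_n$-mass in cells with $\nu_n(C) \le e^{-(h+\eta)|V_n|}$. If $M_- > \eps$ then any greedy cover of $1-\eps$ of $\nu_n$ must include at least $(M_- - \eps)e^{(h+\eta)|V_n|}$ such small cells, so that
\[\cov_\eps(\nu_n;\P^{V_n}) \ge (M_- - \eps)e^{(h+\eta)|V_n|}.\]
Combining with $\cov_\eps \le e^{(h+o_n(1))|V_n|}$ from~(ii) forces $M_- - \eps \le e^{-(\eta/2)|V_n|}$ for $n$ large, so $\limsup_n M_-(n,\eta) \le \eps$ for every $\eps > 0$ and hence $M_-(n,\eta) \to 0$. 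For the mass $M_+(n,\eta)$ carried by cells with $\nu_n(C) \ge e^{-(h-\eta)|V_n|}$, there are at most $e^{(h-\eta)|V_n|}$ such cells, so if $M_+ > 1-\eps_1$ they alone give $\cov_{\eps_1} \le e^{(h-\eta)|V_n|}$, contradicting the bound $\lim_n |V_n|^{-1}\log\cov_{\eps_1}(\nu_n;\P^{V_n}) > h - \eta$ which holds for all sufficiently small $\eps_1$ by~(ii). We may thus fix $\eps_1 = \eps_1(\eta) > 0$ with $M_+(n,\eta) \le 1-\eps_1$ for $n$ large, and conclude $\nu_n(A_n^\eta) = 1 - M_- - M_+ \ge \eps_1(\eta) - o(1)$.

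With this I would diagonalize, choosing $n_k \uparrow \infty$ fast enough that $\nu_n(A_n^{1/k}) \ge \eps_1(1/k)/2$ for $n \ge n_k$ and that $|V_{n_k}|$ is much larger than $-\log\eps_1(1/k)$, then setting $\eta_n := 1/k$ for $n_k \le n < n_{k+1}$. This delivers $\eta_n \downarrow 0$ and $\nu_n(A_n) \ge e^{-o(|V_n|)}$, and via the computation sketched in the first paragraph both the strong AEP and property~(ii). The main obstacle is then verifying property~(i), namely $\mu_n \lde \mu$: Theorem~\ref{thm:stability} transfers LDE across conditioning only when $\nu_n(A_n)$ is bounded below by a positive constant, whereas our $\nu_n(A_n)$ tends to $0$. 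I would address this by a further diagonalization. Fix a countable base $\{U_j\}$ of weak$^\ast$ neighborhoods of $\mu$ (and of $\mu\times\mu$), and let $a_{j,n}$ denote the $\nu_n$- or $(\nu_n\times\nu_n)$-mass of the complement of the corresponding good-model set; by $\nu_n \lde \mu$ we have $a_{j,n}\to 0$ as $n\to\infty$ for each $j$. By slowing the diagonal choice of $\eta_n$ further we can arrange $a_{j,n} = o(\nu_n(A_n)^2)$ for every $j$, so that
\[\frac{a_{j,n}}{\nu_n(A_n)^2} \longrightarrow 0 \qquad \text{for every}\ j.\]
This forces the bad-model complements under $\mu_n$ and $\mu_n\times\mu_n$ to tend to $0$, giving $\mu_n \lde \mu$. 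The delicate balancing of requirements on $\eta_n$---small enough for the strong AEP, yet large enough that $\nu_n(A_n)$ dominates the unknown decay rates of the $a_{j,n}$---is where all the real work lies.
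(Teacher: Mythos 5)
Your construction of the conditioning sets is the same as the paper's: the paper sets $\Q_{a,n}:=\{C\in\P^{V_n}:\nu_n(C)>e^{-a|V_n|}\}$ and takes $A_{k,n}:=\bigcup\Q_{h+1/k,n}\setminus\bigcup\Q_{h-1/k,n}$, which is your $A_n^{\eta}$ with $\eta=1/k$, and your two one-sided mass estimates ($M_-\to 0$ from the covering upper bound, $M_+\leq 1-\eps_1(\eta)$ from counting the large cells) are exactly how the paper shows $\liminf_n\nu_n(A_{k,n})>0$ for each fixed $k$. The derivation of the strong AEP and of property (ii) from the window structure and the bound $\nu_n(A_n)\geq e^{-o(|V_n|)}$ also matches.

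The gap is in your verification of property (i). LDE convergence of $\mu_n$ requires $\mu_n\times\mu_n\stackrel{\rm{le}}{\to}\mu\times\mu$, which has \emph{two} components: the empirical one ($(\mu_n\times\mu_n)(\O(U,\s_n))\to 1$) and the local weak$^\ast$ one (for most vertices $v$, $(\Pi^{\s_n}_v)_\ast(\mu_n\times\mu_n)$ is close to $\mu\times\mu$). Your ratio argument $a_{j,n}/\nu_n(A_n)^2\to 0$ handles only the empirical component: it bounds the conditional mass of the bad-model sets. It says nothing about the vertex-wise pushforwards $(\Pi^{\s_n}_v)_\ast\nu_n(\,\cdot\,|A_n)$, which are not controlled by $(\Pi^{\s_n}_v)_\ast\nu_n$ through any mass inequality once $\nu_n(A_n)$ is small (or indeed even when it is bounded below). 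In the paper this local component is the genuinely delicate part of Theorem~\ref{thm:stability}: it is deduced \emph{from} the empirical component by the barycentre/variance argument of Lemma~\ref{lem:averaging} (if $\int\nu\,\theta(d\nu)=\mu$ and $\int\nu\times\nu\,\theta(d\nu)=\mu\times\mu$ then $\theta=\delta_\mu$), applied to the distribution of the pushforwards over vertices. Your proposal never invokes this, so as written the local weak$^\ast$ convergence of the conditioned measures is unproved.

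The clean repair is the paper's order of operations: for each \emph{fixed} $\eta=1/k$ the sets $A_n^{1/k}$ have $\nu_n$-measure bounded below, so Theorem~\ref{thm:stability} applies verbatim and gives $\nu_n(\,\cdot\,|A_n^{1/k})\lde\mu$ as $n\to\infty$; one then extracts a slowly growing diagonal $k_n$ using a separate diagonal lemma for LDE convergence (Lemma~\ref{lem:diag}), simultaneously arranging $\nu_n(A_{k_n,n})\geq e^{-o(n)}$. Alternatively you could keep your diagonal-first structure, but you would then have to rerun the barycentre argument for the diagonal sequence; either way that argument cannot be omitted.
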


We present the proof following a couple of preparatory lemmas.  The first justifies the use of diagonal arguments for LDE-convergent sequences of measures.

\begin{lem}[Passing to a diagonal subsequence]\label{lem:diag}
Suppose $\mu_{k,n} \in \Pr(\X^{V_n})$ for every $k,n \in \bbN$, and that $\mu_{k,n} \lde \mu$ as $n\to\infty$ for every fixed $k$.  Then also $\mu_{k_n,n} \lde \mu$ whenever the integer sequence $k_1 \leq k_2 \leq \dots$ tends to $\infty$ sufficiently slowly.
\end{lem}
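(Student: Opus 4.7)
The plan is a standard metrizability-plus-diagonalization argument. The key observation is that LDE convergence of a single sequence of measures can be rephrased as the convergence of countably many scalar quantities to $1$.

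Since $(\X,d)$ is compact metrizable, so is $\X^G$ in the product topology, and therefore so is $\Pr(\X^G)$ in its weak$^\ast$ topology; the same holds for the relevant product spaces. I would fix once and for all a countable weak$^\ast$ neighborhood base $U_1, U_2, \dots$ of $\mu\times\mu$ in $\Pr((\X\times \X)^G)$. Unpacking the definitions recalled in Section~\ref{sec:defs}, the assertion $\nu_n \lde \mu$ is then equivalent to the statement that, for every $j \in \bbN$, both of the scalars
\begin{align*}
\alpha_j(\nu_n) &:= \tfrac{1}{|V_n|}\big|\{v \in V_n : (\Pi^{\s_n}_v)_\ast (\nu_n\times \nu_n) \in U_j\}\big|, \\
\beta_j(\nu_n) &:= (\nu_n\times \nu_n)\big(\O(U_j, \s_n)\big)
\end{align*}
tend to $1$ as $n\to\infty$, where $\nu_n\times \nu_n$ is viewed as an element of $\Pr((\X\times \X)^{V_n})$.

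By hypothesis, for each fixed $k$ and each $j$ the scalars $\alpha_j(\mu_{k,n})$ and $\beta_j(\mu_{k,n})$ both tend to $1$ as $n\to\infty$. A routine diagonal selection then yields integers $N_1 < N_2 < \cdots$ such that for every $k$, every $n\geq N_k$, and every $j\leq k$,
\[\alpha_j(\mu_{k,n}) \geq 1 - 1/k \quad\text{and}\quad \beta_j(\mu_{k,n}) \geq 1 - 1/k.\]
Setting $k_n := k$ whenever $N_k \leq n < N_{k+1}$ produces a nondecreasing sequence with $k_n\to\infty$ for which $\mu_{k_n,n}\lde \mu$: for each fixed $j$ one eventually has $k_n \geq j$, after which both $\alpha_j(\mu_{k_n,n})$ and $\beta_j(\mu_{k_n,n})$ are bounded below by $1 - 1/k_n \to 1$. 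Any other sequence $(k'_n)$ with $k'_n \to \infty$ and $k'_n \leq k_n$ works by the same inequalities, and this is the content of the phrase ``sufficiently slowly''.

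I do not anticipate any substantive obstacle. The only step worth verifying carefully is the equivalence between LDE convergence and the countable scalar criterion in the second paragraph, but once metrizability of the weak$^\ast$ topology on $\Pr((\X\times \X)^G)$ is in hand this is immediate from the definitions. Everything else is a transcription of the classical diagonal argument.
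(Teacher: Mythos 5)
Your proof is correct and follows essentially the same route as the paper: fix a countable weak$^\ast$ neighbourhood base around $\mu\times\mu$, express LDE convergence as countably many scalar quantities tending to $1$, and run a standard diagonal selection (the paper uses a nested base $U_1\supseteq U_2\supseteq\cdots$ so that controlling $U_k$ automatically controls all $U_j$ with $j\le k$, where you instead quantify over $j\le k$ explicitly, but this is only a cosmetic difference).
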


\begin{proof}
Let $U_1 \supseteq U_2 \supseteq \dots$ be a basis of weak$^\ast$ neighbourhoods around $\mu\times \mu$.  By assumption, for each $k$ there exists $N_k$ such that
\[\big|\big\{v\in V_n:\ (\Pi_v^{\s_n})_\ast(\mu_{k,n} \times \mu_{k,n}) \in U_k\big\}\big| \geq (1-2^{-k})|V_n| \qquad \forall n\geq N_k\]
and
\[(\mu_{k,n} \times \mu_{k,n})\big(\O(U_k,\s_n)\big) \geq 1 - 2^{-k} \qquad \forall n\geq N_k.\]
Provided $(k_n)_n$ tends to $\infty$ so slowly that $n\geq N_{k_n}$ for all sufficiently large $n$, these imply the LDE convergence of $\mu_{k_n,n}$.
\end{proof}

Secondly, we need to know that having the AEP with a given rate actually controls the Shannon entropy and covering numbers of a sequence of measures.  The following simple facts can be traced to arguments in~\cite{Shannon48}.  The proof is standard, but we include it for completeness.

\begin{lem}\label{lem:AEP-and-others}
	If $(\mu_n)_n$ satisfies the AEP rel $\P$ with rate $h$, then also
	\[\frac{1}{|V_n|}\log\cov_\eps(\mu_n;\P^{V_n}) \to h \quad \hbox{as}\ n\to\infty\]
	for all $\eps \in (0,1)$, and 
	\[\frac{1}{|V_n|}\rmH_{\mu_n}(\P^{V_n}) \to h \quad \hbox{as}\ n\to\infty.\]	
\end{lem}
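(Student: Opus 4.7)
The plan is to follow Shannon's classical AEP argument: isolate the ``typical'' family of cells where the AEP hypothesis has bite, and then estimate both $\cov_\eps$ and $\rmH_{\mu_n}$ separately on this family and its complement.

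For each $\eps > 0$, let $\F_n^\eps$ denote the collection of cells $C \in \P^{V_n}$ satisfying $e^{-(h+\eps)|V_n|} < \mu_n(C) < e^{-(h-\eps)|V_n|}$, so that the hypothesis reads $\mu_n(\bigcup \F_n^\eps) \to 1$. Comparing the sum of the cell measures in $\F_n^\eps$ with $1$ gives $|\F_n^\eps| < e^{(h+\eps)|V_n|}$, while the same comparison combined with the upper bound on each cell measure gives $|\F_n^\eps| > \mu_n(\bigcup \F_n^\eps) \cdot e^{(h-\eps)|V_n|}$, pinning down $|\F_n^\eps|$ to leading exponential order.

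For the covering-number statement I would fix $\eps' \in (0,1)$. The family $\F_n^\eps$ itself serves as a witness cover once $n$ is large enough that $\mu_n(\bigcup\F_n^\eps) > 1 - \eps'$, giving the upper bound $\cov_{\eps'}(\mu_n; \P^{V_n}) \leq |\F_n^\eps| < e^{(h+\eps)|V_n|}$. For the matching lower bound, any $\P_1 \subseteq \P^{V_n}$ with $\mu_n(\bigcup \P_1) > 1-\eps'$ satisfies $\mu_n(\bigcup(\P_1 \cap \F_n^\eps)) > 1 - \eps' - (1 - \mu_n(\bigcup\F_n^\eps))$ by inclusion--exclusion, and since each cell in $\F_n^\eps$ has measure below $e^{-(h-\eps)|V_n|}$, this forces $|\P_1| \geq (1 - \eps' - o(1)) e^{(h-\eps)|V_n|}$. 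Taking logs, dividing by $|V_n|$, and then letting $\eps \downarrow 0$ after $n \to \infty$ yields the claim.

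For the Shannon-entropy statement I would split the sum $\rmH_{\mu_n}(\P^{V_n}) = -\sum_C \mu_n(C)\log\mu_n(C)$ into a typical part indexed by $\F_n^\eps$ and an atypical part. The typical contribution is sandwiched between $\mu_n(\bigcup\F_n^\eps)(h-\eps)|V_n|$ and $\mu_n(\bigcup\F_n^\eps)(h+\eps)|V_n|$ directly from the definition of $\F_n^\eps$. For the atypical part, let $p_n := 1 - \mu_n(\bigcup\F_n^\eps) \to 0$ and use the elementary identity that the atypical entropy contribution equals $-p_n\log p_n + p_n\rmH(q_n)$, where $q_n$ is the conditional distribution on the atypical cells; bounding $\rmH(q_n) \leq |V_n|\log|\P|$ crudely gives an atypical contribution of at most $-p_n\log p_n + p_n|V_n|\log|\P|$, which is $o(|V_n|)$. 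Combining yields $h - \eps + o(1) \leq \frac{1}{|V_n|}\rmH_{\mu_n}(\P^{V_n}) \leq h + \eps + o(1)$; as $\eps$ was arbitrary, this proves the claim. I do not anticipate any real obstacle; the argument is essentially Shannon's original, and the only subtlety is that the atypical entropy contribution could \emph{a priori} blow up through cells of vanishingly small measure, but the crude cardinality bound $|\P|^{|V_n|}$ together with the vanishing mass $p_n$ keeps it sublinear in $|V_n|$.
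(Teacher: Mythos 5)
Your proposal is correct and follows essentially the same route as the paper: isolate the typical family of cells given by the AEP, bound its cardinality above and below, use it (and the intersection argument) to pin down $\cov_\eps$, and split the entropy into a typical part sandwiched by the cell bounds plus an atypical part of size $o(|V_n|)$. The only cosmetic difference is that the paper packages the typical/atypical split as conditioning on the two-set partition $\{A_n,\X^{V_n}\setminus A_n\}$ and lets a single sequence $\eps_n\downarrow 0$ do the work, whereas you keep $\eps$ fixed and send it to $0$ after $n\to\infty$; these are equivalent.
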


\begin{proof}
For each $n$, let $\eps_n > 0$, let
\[\Q_n := \{C \in \P^{V_n}:\ e^{-h|V_n| - \eps_n|V_n|} < \mu_n(C) < e^{-h|V_n| + \eps_n|V_n|}\},\]
and let $A_n := \bigcup \Q_n \subseteq \X^{V_n}$. By the AEP, if we choose $\eps_n$ tending to zero sufficiently slowly, then these sets satisfy $\mu_n (A_n) \to 1$.  We now assume this.

Suppose that $0 < \eps < 1$. Then $\mu_n(A_n) > 1 - \eps$ for all sufficiently large $n$. Since the measures $\mu_n(C)$ for $C \in \Q_n$ must sum to at most $1$, we have
\[|\Q_n| \leq e^{h|V_n| + \eps_n|V_n|} = e^{h|V_n| + o(|V_n|)},\]
and so $\cov_\eps(\mu_n;\P^{V_n}) \leq e^{h|V_n| + o(|V_n|)}$.  On the other hand, consider any $\P_1 \subseteq \P^{V_n}$ satisfying $\mu_n(\bigcup \P_1) > 1-\eps$, and choose some $\eps' \in (\eps,1)$.  If $n$ is sufficiently large, then we have
\[\mu_n\Big(\bigcup(\cal{Q}_n\cap \P_1)\Big) > 1 - \eps' > 0.\]
Since any cell in $\cal{Q}_n\cap \P_1$ has measure at most $e^{-h|V_n| + \eps_n|V_n|}$, this implies that
\[|\P_1| \geq |\cal{Q}_n\cap \P_1| > (1-\eps')e^{h|V_n| - \eps_n|V_n|} = e^{h|V_n| - o(|V_n|)}\]
once $n$ is sufficiently large. So in fact $\cov_\eps(\mu_n;\P^{V_n}) = e^{h|V_n| + o(|V_n|)}$.

Lastly, we may estimate $\rmH_{\mu_n}(\P^{V_n})$ by first conditioning on the coarser partition into $A_n$ and $\X^{V_n}\setminus A_n$:
\begin{multline*}
\rmH_{\mu_n}(\P^{V_n}) = \rmH_{\mu_n}(A_n,\X^{V_n}\setminus A_n) + \mu_n(A_n)\rmH_{\mu_n(\,\cdot\,|A_n)}(\P^{V_n}) \\ + (1 - \mu_n(A_n))\rmH_{\mu_n(\,\cdot\,|\,\X^{V_n}\setminus A_n)}(\P^{V_n}).
\end{multline*}
All three right-hand terms are non-negative.  The first is at most $\log 2$, and the last is at most
\[(1 - \mu_n(A_n))\cdot \log|\P^{V_n}| = (1 - \mu_n(A_n))\cdot \log|\P|\cdot |V_n| = o(|V_n|),\]
since $\mu_n(A_n) \to 1$.  Finally, since the values $\mu_n(C)$ for $C \in \Q_n$ are all equal to $e^{-h|V_n| + o(|V_n|)}$, a simple calculation gives
\[\rmH_{\mu_n(\,\cdot\,|A_n)}(\P^{V_n}) = h|V_n| + o(|V_n|).\]
Therefore the same asymptotic holds for $\rmH_{\mu_n}(\P^{V_n})$.
\end{proof}

\begin{proof}[Proof of Theorem~\ref{thm:A+}]
Let
\[\Q_{a,n} := \{C \in \P^{V_n}:\ \nu_n(C) > e^{-a|V_n|}\}.\]
for each $n\in\bbN$ and any $a \geq 0$.  Clearly $|\Q_{a,n}| < e^{a|V_n|}$, and so for any $k \in \bbN$ we must have
\[\limsup_{n\to\infty}\nu_n\Big(\bigcup \Q_{h-1/k,n}\Big) < 1,\]
for otherwise the subfamily $\Q_{h-1/k,n} \subseteq \P^{V_n}$ would violate assumption (ii) in Theorem A.  On the other hand, in order to compute $\cov_\eps(\nu_n;\P^{V_n})$, it is most efficient to cover $\nu_n$ using cells of $\P^{V_n}$ in order of decreasing $\nu_n$-measure.  So condition (ii) in Theorem A also implies that
\[\nu_n\Big(\bigcup \Q_{h+1/k,n}\Big) \to 1\quad \hbox{as}\ n\to\infty\]
for any $k\in\bbN$.

For each $k \in \bbN$, let
\[A_{k,n} := \bigcup \Q_{h+1/k,n}\Big\backslash \bigcup\Q_{h-1/k,n}\]
and let $\mu_{k,n} := \nu_n(\,\cdot\,|A_{k,n})$.  The estimates above show that $\nu_n(A_{k,n})$ is bounded away from $0$ as $n\to\infty$ for each $k$, so Theorem~\ref{thm:stability} shows that we still have $\mu_{k,n} \lde \mu$ for each $k$.  Given this, Lemma~\ref{lem:diag} provides a sequence $k_1 \leq k_2 \leq \dots$ growing slowly to $\infty$ so that $\mu_{k_n,n}\lde \mu$.  Provided that sequence grows slowly enough, we may also assume that $\nu_n(A_{k_n,n}) \geq e^{-o(n)}$.

Letting $A_n := A_{k_n,n}$ and $\mu_n := \mu_{k_n,n}$, these are of the form asserted in Theorem~\ref{thm:A+}. Let us show that they have all the desired properties. We have already deduced conclusion (i).  For the rest, observe that any $C \in \Q_{h+1/k_n,n}\setminus \Q_{h-1/k_n,n}$ satisfies
\[\frac{1}{\nu_n(A_{k_n,n})}e^{-(h+1/k_n)|V_n|} < \mu_n(C) \leq \frac{1}{\nu_n(A_{k_n,n})}e^{-(h-1/k_n)|V_n|},\]
and that $\mu_n(C) = 0$ for any other $C \in \P^{V_n}$.  Since $\nu_n(A_{k_n,n}) \geq e^{-o(n)}$ and $1/k_n \to 0$, this shows the strong AEP rel $\P$ with rate $h$.  By Lemma~\ref{lem:AEP-and-others}, this also implies conclusion (ii) of the theorem.
\end{proof}

Theorem~\ref{thm:A+} tells us how the new measures $\mu_n$ are obtained from $\nu_n$ in Theorem A. This procedure has further consequences of its own.  As an illustration, let us show that we can actually obtain the AEP relative to a whole family of partitions simultaneously.  This requires one more preparatory lemma.

\begin{lem}\label{lem:big-enough}
	Let $\P$ be a finite measurable partition of $\X$ and let $h \in [0,\infty)$. Let $\nu_n \in \Pr(\X^{V_n})$ for each $n$, and let $B_n \subseteq A_n \subseteq \X^{V_n}$ be such that
	\begin{itemize}
		\item[i)] $\nu_n(A_n) > 0$ for all $n$ and the conditional measures
		\[\nu_n(\,\cdot\,|\,A_n)\]
		have the strong AEP rel $\P$ with rate $h$, and
		\item[ii)] $\nu_n(B_n) \geq e^{-\k_n}\nu_n(A_n)$ for some sequence $\k_n = o(n)$ as $n\to\infty$.
	\end{itemize}
Then the conditional measures $\nu_n(\,\cdot\,|\,B_n)$ have the AEP (not necessarily strong) rel $\P$ with rate $h$.
\end{lem}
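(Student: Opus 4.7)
The plan is to transfer the strong AEP from $\nu_n(\,\cdot\,|\,A_n)$ to $\nu_n(\,\cdot\,|\,B_n)$ by a direct comparison, losing only the word ``strong'' in the process. The key algebraic observation is that, because $B_n \subseteq A_n$, we have
\[\nu_n(C\,|\,B_n) = \frac{\nu_n(C\cap B_n)}{\nu_n(B_n)} \leq \frac{\nu_n(C\cap A_n)}{\nu_n(B_n)} = \nu_n(C\,|\,A_n)\cdot \frac{\nu_n(A_n)}{\nu_n(B_n)} \leq e^{\k_n}\cdot \nu_n(C\,|\,A_n)\]
for every $C \in \P^{V_n}$. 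Thus the multiplicative price of switching from conditioning on $A_n$ to conditioning on $B_n$ is at most $e^{\k_n} = e^{o(n)}$, which is exponentially negligible against the AEP window.

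First, I would fix $\eps > 0$ and pick some $\eps' \in (0,\eps)$, say $\eps' := \eps/2$. Hypothesis (i) applied at scale $\eps'$ guarantees that, for all sufficiently large $n$, the subfamily
\[\Q_n := \big\{C \in \P^{V_n}:\ e^{-(h+\eps')|V_n|} < \nu_n(C\,|\,A_n) < e^{-(h-\eps')|V_n|}\big\}\]
carries the whole mass of $\nu_n(\,\cdot\,|\,A_n)$, and hence also the whole mass of $\nu_n(\,\cdot\,|\,B_n)$ (which is absolutely continuous with respect to $\nu_n(\,\cdot\,|\,A_n)$). A standard counting bound gives $|\Q_n| \leq e^{(h+\eps')|V_n|}$, since each cell in $\Q_n$ has $\nu_n(\,\cdot\,|\,A_n)$-mass at least $e^{-(h+\eps')|V_n|}$ and they sum to at most $1$.

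Next I would check the upper tail. For any $C \in \Q_n$, the comparison displayed above combined with the upper defining inequality of $\Q_n$ yields
\[\nu_n(C\,|\,B_n) \leq e^{\k_n}\cdot e^{-(h-\eps')|V_n|} = e^{-h|V_n| + \eps'|V_n| + \k_n},\]
which is smaller than $e^{-(h-\eps)|V_n|}$ for all sufficiently large $n$, because $\k_n = o(n)$ and $\eps' < \eps$. So no cell of $\Q_n$ violates the upper AEP threshold eventually.

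Finally, for the lower tail I would count: the $\nu_n(\,\cdot\,|\,B_n)$-mass of the cells in $\Q_n$ whose $\nu_n(\,\cdot\,|\,B_n)$-measure is at most $e^{-(h+\eps)|V_n|}$ is bounded by
\[|\Q_n|\cdot e^{-(h+\eps)|V_n|} \leq e^{(h+\eps')|V_n|}\cdot e^{-(h+\eps)|V_n|} = e^{(\eps'-\eps)|V_n|} \to 0.\]
Since all $\nu_n(\,\cdot\,|\,B_n)$-mass lies in $\Q_n$, combining the upper-tail and lower-tail analyses shows that the cells $C$ with $e^{-(h+\eps)|V_n|} < \nu_n(C\,|\,B_n) < e^{-(h-\eps)|V_n|}$ carry $\nu_n(\,\cdot\,|\,B_n)$-mass tending to $1$, which is exactly the AEP rel $\P$ with rate $h$ for the sequence $\nu_n(\,\cdot\,|\,B_n)$. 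There is no real obstacle here; the entire content of the lemma is the budget computation $e^{\k_n} \cdot e^{\eps'|V_n|} \ll e^{\eps|V_n|}$, which is why the hypothesis $\k_n = o(n)$ is quantitatively sharp.
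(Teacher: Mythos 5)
Your proof is correct and follows essentially the same route as the paper's: the same comparison $\nu_n(C\,|\,B_n) \leq e^{\k_n}\nu_n(C\,|\,A_n)$ from $B_n \subseteq A_n$, the same transfer of full mass onto the good family $\Q_n$ via the strong AEP, and the same counting bound $|\Q_n| \leq e^{(h+\eps')|V_n|}$ to kill the lower tail; the only difference is cosmetic bookkeeping of the $\eps$'s (you shrink from $\eps$ to $\eps/2$ where the paper widens from $\eps$ to $2\eps$).
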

	
\begin{proof}
	Let $\eps > 0$, and let
	\[\cal{Q}_n := \big\{C \in \P^{V_n}:\ e^{-h|V_n| - \eps|V_n|} < \nu_n(C\,|\,A_n) < e^{-h|V_n| + \eps|V_n|} \big\}.\]
	Assumption (i) gives that
	\[\nu_n\Big(\bigcup \Q_n\,\Big|\,A_n\Big) = 1\]
	for all sufficiently large $n$, so the same holds with $B_n$ in place of $A_n$ (this is the point at which we need to assume the \emph{strong} AEP).  Also, since the values $\nu(C\,|\,A_n)$ for $C \in \Q_n$ must sum to at most $1$, we have $|\Q_n| < e^{h|V_n| + \eps|V_n|}$ for every $n$.

If $C \in \Q_n$, then
	\[\nu_n(C\,|\,B_n) = \frac{\nu_n(C\cap B_n)}{\nu_n(B_n)}\leq e^{\k_n}\nu_n(C\,|\,A_n)< e^{-h|V_n| + \eps|V_n| + \k_n}.\]
	This is less than $e^{-h|V_n| + 2\eps|V_n|}$ for all sufficiently large $n$. Now let
	\[\Q'_n := \big\{C \in \Q_n:\ \nu_n(C\,|\,B_n) \geq e^{-h|V_n| - 2\eps|V_n|}\big\}.\]
Then the definition of $\Q'_n$ gives
	\[\nu_n\Big(\bigcup(\Q_n\setminus \Q_n')\,\Big|\,B_n\Big) \leq |\Q_n|e^{-h|V_n| - 2\eps|V_n|} < e^{-\eps|V_n|} \to 0\]
	as $n\to\infty$.  Since $\eps$ was arbitrary, the family $\Q'_n$ witnesses the AEP of $\nu_n(\,\cdot\,|\,B_n)$ rel $\P$ with rate $h$.
\end{proof}
	
\begin{cor}
Let $(\X^G,\mu,S,d)$ be a metric $G$-process and let $(\P_k)_k$ be a sequence of finite measurable partitions of $\X$.  Let $h_k \in [0,\infty)$ for each $k$, and let $\nu_n \in \Pr(\X^{V_n})$ be a sequence such that
\begin{itemize}
 \item[i)] $\nu_n \lde \mu$, and
\item[ii)] for every $k$ we have
\[\frac{1}{|V_n|}\log \cov_\eps(\nu_n;\P_k^{V_n}) \to h_k\]
as $n\to\infty$ and then $\eps \downarrow 0$.
\end{itemize}

Then there is another sequence $\mu_n \in \Pr(\X^{V_n})$ which has the same two properties and which also has the AEP rel $\P_k$ with rate $h_k$ for every $k$.
\end{cor}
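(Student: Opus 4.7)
The plan is to iterate Theorem~\ref{thm:A+} to handle one partition at a time and then diagonalize using Lemma~\ref{lem:diag}.  Specifically, for each $K \ge 1$ I will construct a conditional measure
\[
\mu_n^{(K)} := \nu_n(\,\cdot\,|\,A_n^{(K)}), \qquad \nu_n(A_n^{(K)}) \ge e^{-\kappa_n^{(K)}} \text{ with } \kappa_n^{(K)} = o(n),
\]
such that $\mu_n^{(K)} \lde \mu$, $\mu_n^{(K)}$ still satisfies a covering-rate hypothesis of the form in (ii), and in addition $\mu_n^{(K)}$ has the AEP rel $\P_k$ with rate $h_k$ for every $k \le K$.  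Once this is in hand, Lemma~\ref{lem:diag} supplies integers $K_n \uparrow \infty$ slowly enough that the diagonal sequence $\mu_n := \mu_n^{(K_n)}$ still has $\mu_n \lde \mu$.  For each fixed $k$ one has $K_n \ge k$ eventually, so the AEP rel $\P_k$ passes to $\mu_n$, since the AEP is an asymptotic property.  The covering-rate conclusion for $\mu_n$ rel $\P_k$ then follows automatically from Lemma~\ref{lem:AEP-and-others}.

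The base case $K=1$ is just Theorem~\ref{thm:A+} applied to $(\nu_n,\P_1,h_1)$.  For the inductive step I apply Theorem~\ref{thm:A+} afresh to $(\nu_n,\P_{K+1},h_{K+1})$ to obtain a set $C_n^{(K+1)}\subseteq\X^{V_n}$ with $\nu_n(C_n^{(K+1)}) \ge e^{-o(n)}$ such that $\nu_n(\,\cdot\,|\,C_n^{(K+1)})$ has the \emph{strong} AEP rel $\P_{K+1}$ at rate $h_{K+1}$, and I set $A_n^{(K+1)} := A_n^{(K)} \cap C_n^{(K+1)}$.  If one can guarantee that
\begin{equation*}
\nu_n\bigl(A_n^{(K+1)}\bigr) \;\ge\; e^{-o(n)}\,\max\bigl\{\nu_n(A_n^{(K)}),\ \nu_n(C_n^{(K+1)})\bigr\},
\end{equation*}
then Theorem~\ref{thm:stability} gives LDE convergence of $\mu_n^{(K+1)}$, and Lemma~\ref{lem:big-enough} propagates the strong AEP rel each $\P_k$ (for $k \le K$) from $A_n^{(K)}$ to $A_n^{(K+1)}$, and the strong AEP rel $\P_{K+1}$ from $C_n^{(K+1)}$ to $A_n^{(K+1)}$.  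The covering-rate hypothesis on $\mu_n^{(K+1)}$ needed for the next induction step then follows from the AEPs via Lemma~\ref{lem:AEP-and-others}.

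The main obstacle is precisely the intersection estimate above.  The two sets $A_n^{(K)}$ and $C_n^{(K+1)}$ produced by successive invocations of Theorem~\ref{thm:A+} live inside different partitions and \emph{a priori} could be nearly disjoint, so the intersection might have $\nu_n$-measure far below $e^{-o(n)}$.  To overcome this I would work not with $\nu_n$ itself but recursively with $\mu_n^{(K)}$, applying Theorem~\ref{thm:A+} at the $(K{+}1)$-st stage to $(\mu_n^{(K)},\P_{K+1},h_{K+1})$; Theorem~\ref{thm:stability} already provides the LDE hypothesis for $\mu_n^{(K)}$, and one must verify that the covering-rate hypothesis rel $\P_{K+1}$ at rate $h_{K+1}$ persists under the previous conditionings.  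The latter amounts to comparing $\cov_\eps(\mu_n^{(K)};\P_{K+1}^{V_n})$ with $\cov_{\eps \nu_n(A_n^{(K)})}(\nu_n;\P_{K+1}^{V_n})$ and exploiting the slack $\kappa_n^{(K)} = o(n)$ together with hypothesis (ii) for $\nu_n$: the greedy covers witnessing (ii) at rates below $h_{K+1}$ translate, after absorbing the $e^{-\kappa_n^{(K)}}$ loss in $\eps$, into covers of $\mu_n^{(K)}$.  Once this rate-preservation lemma is in hand, the induction closes, and the diagonal extraction completes the proof.
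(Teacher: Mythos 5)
Your overall strategy is the paper's: condition recursively via Theorem~\ref{thm:A+} (applying it at stage $K+1$ to the already-conditioned measures $\mu_n^{(K)}$ rather than to $\nu_n$, exactly to avoid the small-intersection problem you correctly flag), keep the sets nested, extract a slow diagonal via Lemma~\ref{lem:diag}, and invoke Lemma~\ref{lem:big-enough} at the end. But there is a genuine gap in your inductive step. You claim that Lemma~\ref{lem:big-enough} ``propagates the strong AEP rel each $\P_k$ (for $k\le K$) from $A_n^{(K)}$ to $A_n^{(K+1)}$.'' It does not: the lemma takes the \emph{strong} AEP on the larger set as input and returns only the \emph{plain} AEP on the smaller set (the paper's proof of that lemma points out explicitly that strength is needed on the input side and is lost on the output side). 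Consequently the lemma cannot be iterated stage by stage: after one propagation you hold only the plain AEP rel $\P_k$ on $A_n^{(K+1)}$, which is not a legal input for the next propagation. Your stated inductive hypothesis (plain AEP for all $k\le K$) is likewise too weak to justify the propagation you invoke, so the induction is not self-consistent and breaks at the second step. Relatedly, the final assertion that ``the AEP rel $\P_k$ passes to $\mu_n$ since the AEP is an asymptotic property'' is not a proof for a diagonal sequence whose convergence rates may depend on $K$.

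The paper's fix is structural: it does \emph{not} carry the AEP for the earlier partitions through the recursion at all. At stage $k$ it records only that $\nu_n(\,\cdot\,|\,A_{k,n})$ has the strong AEP rel $\P_k$ (the partition introduced at that stage), together with the nesting $A_{k+1,n}\subseteq A_{k,n}$ and the ratio bound $\nu_n(A_{k+1,n})\ge e^{-o(n)}\nu_n(A_{k,n})$. Then, after choosing the diagonal $B_n:=A_{k_n,n}$ slowly enough that $\nu_n(B_n)\ge e^{-o(n)}\nu_n(A_{k,n})$ for each \emph{fixed} $k$, it applies Lemma~\ref{lem:big-enough} exactly once per partition, jumping directly from $A_{k,n}$ (where the strong AEP rel $\P_k$ is known) to $B_n$. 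The strong-to-plain degradation therefore occurs only once for each $\P_k$, and property (ii) for the final sequence is recovered from the AEP via Lemma~\ref{lem:AEP-and-others}. If you restructure your induction this way---dropping the AEP for $\P_1,\dots,\P_K$ from the inductive hypothesis and keeping only the LDE convergence, the covering-rate hypotheses, the nesting, and the measure ratios---your argument closes.
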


\begin{proof}
First, Theorem~\ref{thm:A+} gives a sequence of subsets $A_{1,n} \subseteq \X^{V_n}$ with $\nu_n(A_{1,n}) \geq e^{-o(n)}$ and such that the conditional measures
\[\mu_{1,n} := \nu_n(\,\cdot\,|\,A_{1,n})\]
satisfy properties (i) and (ii) and also the strong AEP rel $\P_1$ with rate $h_1$.

Now we may apply Theorem~\ref{thm:A+} again, this time to the sequence $\mu_{1,n}$, to find subsets $A_{2,n} \subseteq \X^{V_n}$ with $\mu_{1,n}(A_{2,n}) \geq e^{-o(n)}$ and such that the conditional measures
\[\mu_{2,n} := \mu_{1,n}(\,\cdot\,|\,A_{2,n})\]
satisfy properties (i) and (ii) and also the strong AEP rel $\P_2$ with rate $h_2$.  By intersecting with $A_{1,n}$ if necessary, we may clearly also choose $A_{2,n}\subseteq A_{1,n}$.

Continuing in this way, a recursion on $k$ produces a doubly-indexed array $(A_{k,n})_{k,n}$ such that the following hold:
\begin{itemize}
	\item[a)] For each $n$, we have
	\[\X^{V_n}\supseteq A_{1,n}\supseteq A_{2,n} \supseteq A_{3,n}\supseteq \cdots,\]
all these sets have positive measure according to $\nu_n$, and
\[\nu_n(A_{k+1,n}) \geq e^{-o(n)}\nu_n(A_{k,n}) \quad \hbox{as}\ n\to\infty\]
for each $k$.
\item[b)] For each $k$, the sequence of measures $\nu_n(\,\cdot\,|\,A_{k,n})$ satisfies properties (i) and (ii) and also the strong AEP rel $\P_k$ with rate $h_k$.
\end{itemize}

Using these properties and Lemma~\ref{lem:diag}, it follows that if we choose $k_1 \leq k_2 \leq \cdots$ sufficiently slowly, then the sequence of sets $B_n := A_{k_n,n}$ satisfies
\begin{equation}\label{eq:Bs-big}
\nu_n(B_n) \geq e^{-o(n)}\nu_n(A_{k,n}) \quad \hbox{as}\ n\to\infty
\end{equation}
for each fixed $k$, and also the sequence
\[\mu_n := \nu_n(\,\cdot\,|\,B_n)\]
LDE converges to $\mu$. Now property (b) above, the bound~(\ref{eq:Bs-big}), and Lemma~\ref{lem:big-enough} imply that $(\mu_n)_{n\geq 1}$ still satisfies the AEP rel $\P_k$ with rate $h_k$ for every $k$.  Finally, property (ii) still holds for $\mu_n$ by Lemma~\ref{lem:AEP-and-others}.
\end{proof}

Before leaving this section, let us consider how the results above change if the assumption of LDE convergence is weakened to local weak$^\ast$ convergence.

This makes no difference for $G$-systems that are weakly mixing, for then the two modes of convergence coincide~\cite[Corollary 5.7 and Lemma 5.15]{Aus--soficentadd}. However, if $(\X^G,\mu,S)$ is not even ergodic, then Theorem A has no analog for local weak$^\ast$ convergence.

\begin{ex}\label{ex:lw-bad}
	Let $G$ be a sofic group with a finite generating set $S = S^{-1}$, and let $\S = (\s_n)_{n\geq 1}$ be a sofic approximation to $G$ with the property that the graphs
	\[G_n = \{(v,\s_n^s(v)):\ v \in V_n,\ s \in S\} \cup \{(\s_n^s(v),v):\ v\in V_n,\ s\in S\}\]
are an expander sequence.  (This holds, for instance, for a `typical' sofic approximation to a non-amenable free group, or for a sequence of finite quotients of a group with Kazhdan's property (T): see~\cite[Example 5.9]{Aus--soficentadd}.)  This assumption has the consequence that, for any sequence of elements $\bf{x}_n \in \X^{V_n}$, any weak$^\ast$ limit of the empirical measures $P^{\s_n}_{\bf{x}_n}$ in $\Pr^S(\X^G)$ must be ergodic.
	
	Let $\X$ be a finite set with at least two elements, and let $p,q \in \Pr(\X)$ be two probability distributions with $\rmH(p) > \rmH(q)$.  Now let
	\[\mu := \frac{1}{2}p^{\times G} + \frac{1}{2}q^{\times G}.\]
	This is a mixture of two ergodic measures.  Similarly, let
	\[\mu_n := \frac{1}{2}p^{\times V_n} + \frac{1}{2}q^{\times V_n}.\]
	It is easy to show that $\mu_n \lw \mu$.  However, this is not empirical convergence.  Indeed, once $n$ is large, an element $\bf{x} \in \X^{V_n}$ drawn at random with distribution $\mu_n$ has empirical distribution $P^{\s_n}_\bf{x}$ which is probably either close to $p^{\times G}$ or close to $q^{\times G}$, each with probability about $1/2$, but in neither case is $\bf{x}$ a good model for the whole of the measure $\mu$.
	
	A simple calculation gives that
	\begin{equation}\label{eq:cov-H-p}
\log\rm{cov}_\eps(\mu_n) = \rmH(p)\cdot |V_n| + o(|V_n|)
\end{equation}
	for any $\eps \in (0,1/2)$.  This is because, in order to cover at least $1 - \eps > 1/2$ of the measure $\mu_n$, one must in particular cover a positive fraction of the good models for $p^{\times V_n}$.  All such good models have probability
	\[\exp\big(-\rmH(p)\cdot |V_n| - o(|V_n|)\big)\]
	according to $\mu_n$, so such a covering requires
	\[\exp\big(\rmH(p)\cdot |V_n| + o(|V_n|)\big)\]
	points of $\X^{V_n}$.
	
	However, now suppose that $\nu_n \in \Pr(\X^{V_n})$ is a sequence which satisfies the AEP and has $\nu_n \lw \mu$.  By a simple adaptation of the proof of~\cite[Corollary 5.7]{Aus--soficentadd}, it follows that, once $n$ is large, $\nu_n$ is mostly supported on good models for either $p^{\times G}$ or $q^{\times G}$, each with probability about $1/2$, just as we saw above for the measure $\mu_n$.  But there are only at most
	\[\exp\big(\rmH(q)\cdot|V_n| + o(|V_n|)\big)\]
	good models for $q^{\times G}$, so if $(\nu_n)_n$ satisfies the AEP, then it must do so with rate at most $\rmH(q)$.  Since this is strictly less than $\rmH(p)$, we cannot match the exponential growth rate of the covering numbers in~\eqref{eq:cov-H-p}. \fin
\end{ex}

This leaves us with the following.

\begin{ques}
	Does Theorem A still hold if `$\lde$' is weakened to `$\stackrel{\rm{le}}{\to}$'? \fin
\end{ques}

\section{Alternative formulae for model-measure sofic entropy}\label{sec:mment}

\subsection{Proof of the new formulae}

This section proves Theorem B.  First we prove a preliminary result that relates the two right-hand formulae in that theorem to each other.  This is the point where we need Theorem A to provide LDE-convergent sequences of measures which have the AEP.

\begin{prop}\label{prop:mment}
Let $(\X^G,\mu,S,d)$ be a metric $G$-process and let $\P$ be a finite measurable partition of $\X$.  Then
\begin{align*}
&\sup\Big\{\limsup_{i\to\infty} \frac{1}{|V_{n_i}|}\log \cov_\eps(\mu_i;\P^{V_{n_i}}): \eps > 0,\\ &
\qquad \qquad \qquad \qquad \qquad \qquad   n_i\uparrow \infty \ \hbox{and}\ \mu_i \lde \mu\ \hbox{over}\ (\s_{n_i})_{i\geq 1}\Big\}\\
&= \sup\Big\{\limsup_{i\to\infty} \frac{1}{|V_{n_i}|}\rmH_{\mu_i}(\P^{V_{n_i}}):\ n_i\uparrow \infty \ \hbox{and}\ \mu_i \lde \mu\ \hbox{over}\ (\s_{n_i})_{i\geq 1}\Big\}.
\end{align*}
\end{prop}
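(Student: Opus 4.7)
The plan is to establish both inequalities between the two suprema, which I denote by $A$ (the covering-number formula on the left-hand side) and $B$ (the Shannon-entropy formula on the right-hand side).

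For the easier direction $B \leq A$, I would use the standard elementary inequality relating Shannon entropy to covering numbers. Given any qualifying sequence $(\mu_i)$ with $\mu_i \lde \mu$ over $(\s_{n_i})$ and any $\eps > 0$, split $\P^{V_{n_i}}$ into a ``good'' subfamily of cardinality $\cov_\eps(\mu_i;\P^{V_{n_i}})$ covering more than $1-\eps$ of $\mu_i$ and its complement. Conditioning $\rmH_{\mu_i}(\P^{V_{n_i}})$ on the resulting two-set coarser partition, and bounding each conditional entropy by the logarithm of the number of contributing cells, yields
\[
\rmH_{\mu_i}(\P^{V_{n_i}}) \,\leq\, \log 2 \,+\, \log\cov_\eps(\mu_i;\P^{V_{n_i}}) \,+\, \eps\cdot|V_{n_i}|\cdot\log|\P|.
\]
Dividing by $|V_{n_i}|$, taking $\limsup$ in $i$, and letting $\eps \downarrow 0$, I would obtain $\limsup_i \frac{1}{|V_{n_i}|}\rmH_{\mu_i}(\P^{V_{n_i}}) \leq A$. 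Since $(\mu_i)$ is an arbitrary LDE-convergent sequence, this gives $B \leq A$.

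For the reverse inequality $A \leq B$, I would invoke Theorem~\ref{thm:A+}. Start with any qualifying data $(\mu_i)$ and $\eps_0 > 0$ realizing $A$ up to an error $\delta$, say with $\limsup_i \frac{1}{|V_{n_i}|}\log\cov_{\eps_0}(\mu_i;\P^{V_{n_i}}) = h_0$. First pass to a subsequence (which automatically inherits LDE convergence) to turn this limsup into a genuine limit. Then run a diagonal argument over a fixed countable sequence $\eps_k \downarrow 0$ containing $\eps_0$ to secure the existence of the limits $h(\eps_k) := \lim_i \frac{1}{|V_{n_i}|}\log\cov_{\eps_k}(\mu_i;\P^{V_{n_i}})$ for every $k$. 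These quantities are non-increasing in $k$ (since $\cov_\eps$ is non-increasing in $\eps$) and uniformly bounded above by $\log|\P|$, so $h := \lim_k h(\eps_k)$ exists in $[0,\log|\P|]$ and satisfies $h \geq h_0$. The iterated-limit hypothesis of Theorem~\ref{thm:A+} is now met, with $(\s_{n_i})_i$ playing the role of the sofic approximation, so the theorem produces a new sequence $\mu'_i \in \Pr(\X^{V_{n_i}})$ with $\mu'_i \lde \mu$ and the strong AEP rel $\P$ at rate $h$. Lemma~\ref{lem:AEP-and-others} then gives $\frac{1}{|V_{n_i}|}\rmH_{\mu'_i}(\P^{V_{n_i}}) \to h \geq h_0$, and so $B \geq h_0$; letting $\delta \downarrow 0$ yields $B \geq A$.

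The main obstacle is the subsequence extraction in the second direction: Theorem~\ref{thm:A+} demands the existence of the iterated limit $\lim_{\eps \downarrow 0}\lim_{n\to\infty} \frac{1}{|V_n|}\log\cov_\eps(\,\cdot\,;\P^{V_n})$, not merely a limsup over $i$ for a single value of $\eps$. The countable diagonal argument above delivers this limit while preserving LDE convergence and without sacrificing any of the exponential growth rate of the covering numbers. Once that is arranged, the rest is a direct translation of the AEP into the convergence of normalized Shannon entropies via Lemma~\ref{lem:AEP-and-others}.
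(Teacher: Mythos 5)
Your proof follows the paper's argument essentially verbatim: the direction $B\le A$ is the same two-set conditioning bound on $\rmH_{\mu_i}(\P^{V_{n_i}})$, and $A\le B$ is the same appeal to Theorem~\ref{thm:A+} together with Lemma~\ref{lem:AEP-and-others} after passing to a subsequence (your diagonal extraction over $\eps_k\downarrow 0$ to secure the iterated-limit hypothesis is a welcome extra precision that the paper leaves implicit). One trivial slip: since $\cov_\eps$ is non-increasing in $\eps$ and $\eps_k\downarrow 0$, the limits $h(\eps_k)$ are non-\emph{decreasing} in $k$, not non-increasing --- which is exactly the monotonicity that makes your conclusion $h\ge h_0$ correct.
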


\begin{proof}
	Let $h_1$ and $h_2$ be the left- and right-hand sides here, respectively.
		
	\vspace{7pt}
	
	\emph{Step 1.}\quad Fix any sequences $(\s_{n_i})_{i\geq 1}$ and $(\mu_i)_{i \geq 1}$. For any $i\geq 1$ and $\Q \subseteq \P^{V_{n_i}}$, we may let $A = \bigcup \Q$ and then condition on the partition $\{A,\X^{V_{n_i}}\setminus A\}$ to obtain
	\begin{align*}
	\rmH_{\mu_i}(\P^{V_{n_i}}) &= \rmH_{\mu_i}(A,\X^{V_{n_i}}\setminus A) + \mu_i(A)\rmH_{\mu_i(\,\cdot\,|A)}(\P^{V_{n_i}}) \\ & \qquad \qquad \qquad \qquad + (1-\mu_i(A))\rmH_{\mu_i(\,\cdot\,|\X^{V_{n_i}}\setminus A)}(\P^{V_{n_i}})\\ &\leq \log 2 + \mu_i(A)\log|\Q| + (1-\mu_i(A))|V_{n_i}|\log|\P|.
	\end{align*}
	Optimizing over collections $\Q$ for which $\mu_i(A) > 1 - \eps$, this gives
	\[\rmH_{\mu_i}(\P^{V_{n_i}}) \leq \log 2 + \log\cov_\eps(\mu_i;\P^{V_n}) + \eps|V_{n_i}|\log|\P|.\]
	Since $\eps$ may be made arbitrarily small, this gives $h_1 \geq h_2$.
	
	\vspace{7pt}
	
	\emph{Step 2.}\quad Fix any sequences $(\s_{n_i})_{i\geq 1}$ and $(\mu_i)_{i\geq 1}$ and also $\eps > 0$.  By passing to a further subsequence if necessary, we may assume that the quantities
	\[\frac{1}{|V_{n_i}|}\log \cov_\eps(\mu_i;\P^{V_{n_i}})\]
	actually converge to some $h\geq 0$ as $i\to\infty$, where this $h$ is the limit supremum for the original sequences.
	
	Now Theorem A provides a new sequence of measures $\nu_i$ which still satisfy $\nu_i \lde \mu$ over $(\s_{n_i})_{i\geq 1}$ and which also satisfy the AEP rel $\P$ with rate $h$.  Then Lemma~\ref{lem:AEP-and-others} gives
	\[\frac{1}{|V_{n_i}|}\rmH_{\nu_i}(\P^{V_{n_i}})\to h,\]
	so this shows that $h_1 \leq h_2$.	
\end{proof}

\begin{proof}[Proof of Theorem B]
Proposition~\ref{prop:mment} gives the equality of the two right-hand expressions.  Calling their common value $h$, we need only show that it equals $\rmh_\S^\m(\mu)$.

\vspace{7pt}

\emph{Step 1.}\quad Let $\eps,\delta > 0$, and let $(\s_{n_i})_{i\geq 1}$ and $(\mu_i)_{i\geq 1}$ be two sequences as in the definition of $\rmh^\m_\S(\mu)$.  By property (Pi), there exists $\P \in \frak{P}$ all of whose cells have diameter less than $\delta$.  Now any cell of $\P^{V_{n_i}}$ also has diameter less than $\delta$ according to $d^{(V_{n_i})}$, and therefore
\[\cov_{\eps,\delta}(\mu_i,d^{(V_{n_i})}) \leq \cov_\eps(\mu_i;\P^{V_{n_i}}).\]
This shows that $\rmh_\S^\m(\mu) \leq h$.

\vspace{7pt}

\emph{Step 2.}\quad Fix the sequences $(\s_{n_i})_{i\geq 1}$ and $(\mu_i)_{i\geq 1}$ and also $\eps > 0$ and $\P \in \frak{P}$. By property (Pii), we have
\[\mu_e\Big(\bigcup_{C \in \P}\rm{int}(C)\Big) = 1,\]
where $\rm{int}(C)$ denotes the interior of $C$.  Therefore there exist  $r > 0$ and further open subsets $G_C \subseteq C$ for each $C \in \P$ such that
\[d(x,x') > r \quad \hbox{whenever}\quad x \in G_C,\ x' \in G_{C'}\ \hbox{and}\ C\neq C',\]
and such that the open set
\[G := \bigcup_{C \in \P}G_C\]
has $\mu_e (G) > 1 - \eps^2/2$.  By the portmanteau theorem, it follows that the set
\[U := \big\{\nu \in \Pr(\X^G):\ \nu\{x:\ x_e \in G\} > 1 - \eps^2/2\big\}\]
is a weak$^\ast$ neighbourhood of $\mu$.

Next, observe that
\begin{align*}
\int |\{v\in V_{n_i}:\ x_v \in G\}|\,\mu_i(d\bf{x}) &= \sum_{v \in V_{n_i}}\mu_i\{\bf{x} \in \X^{V_{n_i}}:\ x_v \in G\}\\
&= \sum_{v \in V_{n_i}}((\Pi^{\s_{n_i}}_v)_\ast\mu_i)(\{x \in \X^G:\ x_e\in G\})\\
&> (1-\eps^2/2)|\{v\in V_{n_i}:\ (\Pi^{\s_{n_i}}_v)_\ast\mu_i \in U\}|.
\end{align*}
Since $\mu_i \lde \mu$, we have in particular that $\mu_i \lw \mu$.  Therefore the above integral is greater than $(1-\eps^2)|V_{n_i}|$ for all sufficiently large $i$.  By Markov's inequality, it follows that the sets
\[Z_i := \big\{\bf{x} \in\X^{V_{n_i}}:\ |\{v \in V_{n_i}:\ x_v \in G\}| > (1-\eps)|V_{n_i}|\big\}\]
satisfy $\mu_i(Z_i) > 1 - \eps$ for all sufficiently large $i$. Let
\[\Q_i := \{D\cap Z_i:\ D \in \P^{V_{n_i}}\}.\]

Now suppose that $\delta < \eps r$ and that $\bf{x},\bf{y} \in Z_i$ satisfy $d^{(V_{n_i})}(\bf{x},\bf{y}) < \delta$.  The fact that $\bf{x},\bf{y} \in Z_i$ gives that
\[|\{v\in V_{n_i}:\ x_v,y_v \in G\}| > (1-2\eps)|V_{n_i}|.\]
Combining this with the fact that $d^{(V_{n_i})}(\bf{x},\bf{y}) < \delta$ and Markov's inequality, we obtain
\begin{multline*}
\big|\big\{v\in V_{n_i}:\ x_v,y_v \ \hbox{lie in}\ G_C\ \hbox{for the same}\ C\in \P\big\}\big| \\ > (1-2\eps - \delta/r)|V_{n_i}| > (1-3\eps)|V_{n_i}|.
\end{multline*}
It follows that if $\bf{x} \in F \subseteq Z_i$, and if $F$ has diameter at most $\delta$ according to $d^{(V_{n_i})}$, then $F$ can have nonempty intersection with at most
\[2^{\rmH(3\eps,1-3\eps)|V_{n_i}|}\cdot |\P|^{3\eps|V_{n_i}|}\]
cells of $\Q_i$, since this is an upper bound on the number of elements of $\Q_i$ that agree with the cells containing $x_v$ for at least $(1-3\eps)|V_{n_i}|$ vertices $v$. (This is a standard bound on the cardinality of Hamming balls: see, for instance,~\cite[Section 5.4]{GolPin--book}.)

Finally, once $i$ is large enough that we have $\mu_i(Z_i) > 1 - \eps$, the last estimate implies that
\begin{align*}
&\log \cov_{\eps,\delta/2}(\mu_i,d^{(V_{n_i})}) \\
&\geq \log\min\big\{k:\ \exists F_1,\dots,F_k \subseteq Z_i\ \hbox{all having diameter $< \delta$}\\
& \qquad \qquad \qquad \qquad \qquad \qquad \hbox{and such that}\ \mu(F_1\cup\cdots\cup F_k) > 1-2\eps \big\}\\
&\geq \log \cov_{2\eps}(\mu_i;\P^{V_{n_i}}) - \rmH(3\eps,1-3\eps) |V_{n_i}| - 3\eps|V_{n_i}|\log|\P|.
\end{align*}
Since we may choose $\eps$ arbitrarily small, this implies that $\rmh_\S^\m(\mu) \geq h$.
\end{proof}

Next let us see that Theorem B can fail if one does not assume both properties (Pi) and (Pii).  Without property (Pi), it is easy to see from the proof that one could have $h < \rmh_\S^\m(\mu)$, where again $h$ denotes the right-hand side in the theorem.  More interestingly, without property (Pii), the reverse failure can occur.

\begin{ex}
Suppose that is $\X$ is an uncountable compact metric space which has no isolated points, and let $\frak{P}$ consist of \emph{all} finite Borel partitions of $\X$.  Suppose also that $\rmh_\S^\m(\mu) \geq 0$: this simply asserts that there are some subsequence $n_i \uparrow \infty$ and measures $\mu_i \in \Pr(\X^{V_{n_i}})$ which satisfy $\mu_i \lde \mu$ over $(\s_{n_i})_{i\geq 1}$.  Let us show that in this case the right-hand formulae of Theorem B are both equal to $+\infty$, irrespective of the exact value of $\rmh_\S^\m(\mu)$.

By the Baire category theorem, any co-countable subset of $\X$ is dense.  Using this fact, for any $k\in\bbN$, a simple recursion produces a Borel partition
\[\P= (C_1,\dots,C_{k-1},R)\]
in which $C_1$, \dots, $C_{k-1}$ are all countable dense subsets of $\X$, and the remainder set $R$ is also dense.

It follows that every cell of $\P^{V_{n_i}}$ is dense in $\X^{V_{n_i}}$ for every $i$. Therefore we can make a sequence of smaller and smaller perturbations $\nu_i$ of $\mu_i$ in the weak$^\ast$ topologies so that, on the one hand, we still have $\nu_i \lde \mu$; but, on the other, each $\nu_i$ gives equal weight to every cell of $\P^{V_{n_i}}$, and hence $\rmH_{\nu_i}(\P^{V_{n_i}}) = \log k\cdot|V_{n_i}|$.  It follows that the second right-hand quantity in Theorem B is greater than $\log k$ for every $k \in \bbN$, and so must be infinite. Since Proposition~\ref{prop:mment} still applies, this is also true of the first right-hand quantity. \fin
\end{ex}

\subsection{Related results for local weak$^\ast$ convergence}

Natural variants of model-measure sofic entropy can be obtained by replacing LDE convergence in the definition with other modes of convergence.  The paper~\cite{Aus--soficentadd} already includes a notion which uses only LE convergence; there it was called `quenched model-measure sofic entropy', and here we denote it by $\rmh_\S^{\rm{le}}$.  Most of these variants probably have little value for ergodic theory, but some may have an important auxiliary role in computing values of other, existing entropy notions.  For instance, the quantity $\t{\rmh}_\S(\mu)$ of Subsection~\ref{subs:coind} plays such a role through Theorem C.

Here we consider replacing LDE convergence with local weak$^\ast$ convergence.  We have already seen that Theorem A has no analog in that setting. Some parts of the previous subsection do still work, but others fail. Of course, problems appear only in case $\mu$ is not weakly mixing, since all these modes of convergence coincide for weakly mixing processes.

In the proof of Proposition~\ref{prop:mment}, Step 1 makes no reference to the specific mode of convergence, so it still applies without change.  On the other hand, Step 2 relies on Theorem A, which is not available for local weak$^\ast$ convergence in case $\mu$ is not weakly mixing, as seen in Example~\ref{ex:lw-bad}.  So in the case of local weak$^\ast$ convergence we obtain only an inequality.

\begin{prop}\label{prop:lwent}
	Let $(\X^G,\mu,S,d)$ be a metric $G$-process and let $\P$ be a finite measurable partition of $\X$.  Then
	\begin{align*}
		&\sup\Big\{\limsup_{i\to\infty} \frac{1}{|V_{n_i}|}\log \cov_\eps(\mu_i;\P^{V_{n_i}}): \eps > 0,\\ &
		\qquad \qquad \qquad \qquad \qquad \qquad   n_i\uparrow \infty \ \hbox{and}\ \mu_i \lw \mu\ \hbox{over}\ (\s_{n_i})_{i\geq 1}\Big\}\\
		&\geq \sup\Big\{\limsup_{i\to\infty} \frac{1}{|V_{n_i}|}\rmH_{\mu_i}(\P^{V_{n_i}}):\ n_i\uparrow \infty \ \hbox{and}\ \mu_i \lw \mu\ \hbox{over}\ (\s_{n_i})_{i\geq 1}\Big\}.
	\end{align*}
	\qed
\end{prop}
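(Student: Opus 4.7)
The plan is to recycle Step~1 of the proof of Proposition~\ref{prop:mment} without any modification, as the author essentially announces just before the statement. The crucial observation is that Step~1 is a purely single-measure bookkeeping argument on $\mu_i$: given a sub-collection $\Q \subseteq \P^{V_{n_i}}$ whose union $A$ satisfies $\mu_i(A) > 1 - \eps$, one conditions $\P^{V_{n_i}}$ on the binary partition $\{A, \X^{V_{n_i}} \setminus A\}$ and estimates each conditional entropy trivially, yielding
\[
\rmH_{\mu_i}(\P^{V_{n_i}}) \leq \log 2 + \log \cov_\eps(\mu_i; \P^{V_{n_i}}) + \eps |V_{n_i}| \log |\P|.
\]
No appeal is made to the relationship between $\mu_i$ and $\mu$, so the mode of convergence is irrelevant to this estimate.

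Given this, I fix arbitrary sequences $(\s_{n_i})_{i\geq 1}$ and $(\mu_i)_{i\geq 1}$ with $\mu_i \lw \mu$ over $(\s_{n_i})_{i\geq 1}$, apply the above bound for each $i$ and each $\eps > 0$, divide by $|V_{n_i}|$, and take $\limsup_{i\to\infty}$ to obtain
\[
\limsup_{i\to\infty} \frac{1}{|V_{n_i}|} \rmH_{\mu_i}(\P^{V_{n_i}}) \leq \limsup_{i\to\infty} \frac{1}{|V_{n_i}|} \log \cov_\eps(\mu_i; \P^{V_{n_i}}) + \eps \log |\P|.
\]
The right-hand side is bounded above, by definition, by (the left-hand supremum in the proposition) $+\, \eps \log |\P|$. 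Letting $\eps \downarrow 0$ and then taking the supremum over admissible sequences on the left gives the claim.

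Because the proof collapses to this one-line estimate, there is no substantive obstacle. The reason only the inequality, and not the equality, can be obtained is exactly that the reverse direction would require an analog of Theorem~A for local weak$^\ast$ convergence, which Example~\ref{ex:lw-bad} shows to be false in general: one can have $\mu_n \lw \mu$ realizing a large Shannon-entropy rate, while no AEP-sequence locally weak$^\ast$ converging to $\mu$ can attain that rate.
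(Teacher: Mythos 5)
Your proof is correct and is exactly the paper's argument: the author disposes of Proposition~\ref{prop:lwent} by observing that Step~1 of the proof of Proposition~\ref{prop:mment} (the conditioning estimate $\rmH_{\mu_i}(\P^{V_{n_i}}) \leq \log 2 + \log\cov_\eps(\mu_i;\P^{V_{n_i}}) + \eps|V_{n_i}|\log|\P|$) never uses the mode of convergence, which is precisely the computation you carry out. Your closing remark about why only an inequality survives also matches the paper's discussion of Example~\ref{ex:lw-bad}.
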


Similarly, the argument above for Theorem B uses only the convergence $\mu_i \lw \mu$, not the full strength of the assumption that $\mu_i \lde \mu$. So, together with the above proposition, that proof can be re-applied for local weak$^\ast$ convergence. This gives a counterpart of Theorem B which again provides only an inequality.

\begin{thm}\label{thm:lwent}
	Let $(\X^G,\mu,S,d)$ be a metric $G$-process and let $\frak{P}$ be any family of finite Borel partitions of $\X$ having properties (Pi) and (Pii), as in Theorem B.  Then
	\begin{align*}
		&\sup\Big\{\sup_{\eps,\delta > 0}\limsup_{i\to\infty} \frac{1}{|V_{n_i}|}\log \cov_{\eps,\delta}(\mu_i,d^{(V_{n_i})}):\\ & \qquad \qquad \qquad \qquad \qquad \qquad n_i\uparrow \infty \ \hbox{and}\ \mu_i \lw \mu\ \hbox{over}\ (\s_{n_i})_{i\geq 1}\Big\}\\
		&= \sup\Big\{\limsup_{i\to\infty} \frac{1}{|V_{n_i}|}\log \cov_\eps(\mu_i;\P^{V_{n_i}}): \eps > 0,\ \P \in \frak{P},\\ & \qquad \qquad \qquad \qquad \qquad \qquad n_i\uparrow \infty \ \hbox{and}\ \mu_i \lw \mu\ \hbox{over}\ (\s_{n_i})_{i\geq 1}\Big\}\\
		&\geq \sup\Big\{\limsup_{i\to\infty} \frac{1}{|V_{n_i}|}\rmH_{\mu_i}(\P^{V_{n_i}}): \P \in \frak{P},\\ & \qquad \qquad \qquad \qquad \qquad \qquad  n_i\uparrow \infty \ \hbox{and}\ \mu_i \lw \mu\ \hbox{over}\ (\s_{n_i})_{i\geq 1}\Big\}.
	\end{align*}
	\qed
\end{thm}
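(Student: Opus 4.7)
The plan is to adapt the proof of Theorem~B, keeping careful track of which steps rely only on local weak$^\ast$ convergence (and therefore survive the weakening of hypothesis) and which steps actually invoke Theorem~A (and therefore must be replaced by a one-sided estimate).

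First I would prove the inequality between the second and third suprema. For this, fix a single partition $\P \in \frak{P}$ and a sequence $\mu_i \lw \mu$ over $(\s_{n_i})_{i\geq 1}$. Step~1 of the proof of Proposition~\ref{prop:mment} establishes the pointwise estimate
\[\rmH_{\mu_i}(\P^{V_{n_i}}) \leq \log 2 + \log\cov_\eps(\mu_i;\P^{V_{n_i}}) + \eps|V_{n_i}|\log|\P|\]
purely by conditioning on a subcollection $\Q \subseteq \P^{V_{n_i}}$ with $\mu_i(\bigcup \Q) > 1 - \eps$ and then applying $\rmH_{\mu_i(\,\cdot\,|\,\cdot)}(\P^{V_{n_i}}) \leq \log|\P|\cdot |V_{n_i}|$ on the complement. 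This argument makes no reference to any mode of convergence, so it is available verbatim. Dividing by $|V_{n_i}|$, taking the limit supremum, and letting $\eps\downarrow 0$ and then taking the supremum over $\P$ and over sequences gives the desired inequality. This is precisely Proposition~\ref{prop:lwent} generalized to the family $\frak{P}$.

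Next I would prove the equality of the first two suprema. The argument is a direct transcription of the proof of Theorem~B, since that proof never needed more than $\mu_i \lw \mu$. Specifically, Step~1 of the proof of Theorem~B uses only property (Pi) to find $\P \in \frak{P}$ of diameter less than $\delta$ and then applies the tautological bound $\cov_{\eps,\delta}(\mu_i, d^{(V_{n_i})}) \leq \cov_\eps(\mu_i;\P^{V_{n_i}})$. This gives the first quantity $\leq$ the second. Conversely, Step~2 constructs an open neighbourhood $U$ of $\mu$ in $\Pr(\X^G)$ via the portmanteau theorem applied to the interiors $\rm{int}(C)$ for $C \in \P$ (using property (Pii)), and then uses only the inequality $\mu_i \lw \mu$ to conclude via Markov's inequality that the high-probability set $Z_i$ satisfies $\mu_i(Z_i) > 1-\eps$. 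The Hamming-ball counting estimate that follows is combinatorial and does not involve convergence. Hence Step~2 gives the second quantity $\leq$ the first.

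The main obstacle is purely a conceptual one: there is no step at which we need or can use Theorem~A, because LDE-convergent AEP sequences are simply unavailable in the local weak$^\ast$ setting (as Example~\ref{ex:lw-bad} demonstrates). This is precisely what prevents us from upgrading the inequality between the second and third suprema to an equality; the situation mirrors Proposition~\ref{prop:lwent}, whose one-sided bound is sharp in general without weak mixing. Since every other ingredient of Theorem~B's proof transfers literally, nothing further needs to be done.
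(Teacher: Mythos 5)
Your proposal is correct and follows exactly the route the paper itself takes: the inequality between the last two suprema is Step 1 of Proposition~\ref{prop:mment} (the conditioning bound, which is convergence-agnostic), and the equality of the first two is the proof of Theorem~B re-run verbatim, since that argument only ever invokes $\mu_i \lw \mu$. You also correctly identify that the missing ingredient for upgrading the inequality to an equality is Theorem~A, which fails for local weak$^\ast$ convergence by Example~\ref{ex:lw-bad}.
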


The last quantity in Theorem~\ref{thm:lwent} is $\t{\rmh}_\S(\mu)$ again.  The inequality can be strict: Example~\ref{ex:lw-bad} demonstrates this, too.  By slightly adapting the proof of~\cite[Theorem 6.4]{Aus--soficentadd}, it is not hard to show that the first two formulae above give another quantity which is invariant under measure-theoretic isomorphism.  We omit the details since I do not currently know of any applications for this quantity.  Moreover, I believe it can be related to existing invariants as follows.  For $\mu_i \in \Pr(\X^{V_{n_i}})$, we write
\[P^{\s_{n_i}}_\ast\mu_i \in \Pr(\Pr(\X^G))\]
for the pushforward of $\mu_i$ under the empirical-distribution map $\bf{x}\mapsto P^{\s_{n_i}}_{\bf{x}}$.  According to~\cite[Lemma 5.6]{Aus--soficentadd}, any weak$^\ast$ limit $\theta$ of the sequence $(P^{\s_{n_i}}_\ast\mu_i)_{i\geq 1}$ must be supported on $\Pr^G(\X^G)$ and have barycentre equal to $\mu$.  Let us pass to a subsequence so that in fact $P^{\s_{n_i}}_\ast\mu_i \stackrel{\rm{weak}^\ast}{\to} \theta$. Then it should follow that the first two formulae of Theorem~\ref{thm:lwent} are equal to the essential supremum of $\rmh^{\rm{le}}_\S(\nu)$ when $\nu$ is drawn from the distribution $\theta$. In particular, if $\mu$ is ergodic, then these two formulae simply compute $\rmh_\S^{\rm{le}}(\mu)$, as can be deduced directly from~\cite[Corollary 5.7]{Aus--soficentadd}.

These facts would give a relative of some older work~\cite{Wink64,Wink70} in information theory which addresses the following situation. Let $(\X^\bbZ,\mu,S)$ be a stationary source with a finite alphabet, and let $\mu_n$ be the marginal of $\mu$ on $\X^n$ for each $n$.  If $\mu$ is not ergodic, then the AEP can fail (put another way: the Shannon-McMillan theorem does not hold in its usual form), and the KS-entropy need not be equal to the exponential growth-rate of the covering numbers of the measures $\mu_n$.  In this case the KS-entropy is the average of the KS-entropies of the ergodic components, whereas the growth of the covering numbers is governed by the full distribution of those entropy values over the ergodic decomposition of $\mu$. In particular, the exponential growth rate of $\rm{cov}_\eps(\mu_n)$ approaches the essential supremum of the KS-entropies of the ergodic components as $\eps \downarrow 0$.  See also~\cite{Wink77} for a generalization to processes with arbitrary state spaces and a resulting non-ergodic generalization of Krieger's generator theorem; and~\cite{Suj80} for some more analysis of the distribution of KS-entropies over ergodic components, with an application to a source coding theorem.

\subsection{Alternative approaches for general metric processes}

In this paper, families of partitions which satisfy properties (Pi) and (Pii) are simply a convenient way to bring Shannon entropy and the AEP to bear when studying processes with non-finite state spaces.  Questions about the appropriate `entropy notion' when faced with such general state spaces have a long history in information theory.  The traditional setting is shift-invariant measures on $\X^\bbZ$ for some standard measurable space $\X$.  In that case the classical Shannon--McMillan theorem becomes available upon suitably quantizing $\X$, although one must be careful about taking limits in the right order. See~\cite{GraKie80}, for example, which also approaches quantization by starting with a choice of suitable sequences of partitions.

I expect relatives of Theorem B can be found which do not start with families satisfying (Pi) and (Pii).  But two issues arise immediately: choosing a suitable `metric' notion of Shannon entropy; and making sense of the AEP without reference to a particular partition.

To address the first issue, one could seek a reformulation of Theorem B using an appropriate version of `epsilon Shannon entropy' for a metric probability space $(X,d,\mu)$, such as the quantity
\begin{align*}
\rmH_{\eps,\delta}(d,\mu) &:= \inf\Big\{\rmH_\mu(\{U_0,U_1,\dots,U_k\}):\\ & \qquad \qquad U_0,\dots,U_k\ \hbox{are a Borel partition of $X$,}\\
& \qquad \qquad \mu(U_0) < \eps\ \hbox{and}\ \rm{diam}(U_i,d) < \delta\ \forall i = 1,2,\dots,k\Big\}
\end{align*}
for $\eps,\delta > 0$. This still involves partitions of the metric space, but they are hidden inside a natural `Shannon-like' quantity which depends only on the metric space itself.

Such quantities have their own place in information theory. In particular,~\cite{PosRodRum67,PosRod72,Katet86} develop the basic properties of a quantity very similar to $\rmH_{\eps,\delta}(d,\mu)$, and involve some similar estimates to those in the present paper.  Vershik has also explored notions with this flavour in the setting of ergodic theory: see, for instance, the end of Section 4 in~\cite{Ver04} or~\cite[Section 6]{Ver00}.

Some of those works do study behaviour under Cartesian products, but I do not know of an established metric-space version of the AEP.  In our setting, one candidate is as follows.  Given a compact metric space $(\X,d)$, a sequence $\mu_n \in \Pr(\X^{V_n})$ could satisfy the `metric AEP with rate $h$' if for every $\eta > 0$ and every $\eps \in (0,1/2)$ there exists $\delta_0 > 0$ such that
\[\limsup_{n\to\infty}\Big|\frac{1}{|V_n|}\log \min \Big\{|F|:\ F\subseteq \X^{V_n},\ \mu_n(B_\delta(F)) > a\Big\} - h\Big| < \eta\]
for all $\delta \in (0,\delta_0)$ and all $a \in (\eps,1-\eps)$, where $B_\delta(F)$ is the $\delta$-neighbourhood of $F$ according to the metric $d^{(V_n)}$.  Informally, for any radius parameter $\delta$ less than $\delta_0$, this asserts that it takes roughly $\exp(h|V_n|)$ balls of radius $\delta$ to cover $\mu_n$, whether one wishes to cover 5\% or 95\% of this measure.

In case $\X$ is finite, it is not hard to deduce the above property from the usual AEP.  However, I have not explored further how this metric definition is related to the other problems of the present paper in the case of general $(\X,d)$.  It is not clear to me that this approach gives any real advantage over the more explicit use of partition-families that we have adopted above.

\section{Application to co-induced systems}\label{sec:coind}

This section proves Theorem C. Fix a metric $G$-process $(\X^G,\mu,S,d)$, so that the co-induced system is the metric ${(G\times H)}$-process $(\X^{G\times H},\mu^{\times H},S,d)$.  Let $\frak{P}$ be a collection of Borel partitions of $\X$ having the two properties (Pi) and (Pii) required by Theorem B.

In this section we use the following notation.  If $\mu \in \Pr(\X^G)$ and $E \subseteq G$, then $\mu_E$ denotes the marginal of $\mu$ on $\X^E$: that is, the image measure of $\mu$ under the coordinate projection map $\X^G\to \X^E$.  Similarly, if $\mu \in \Pr(\X^V)$ for some finite set $V$ and $U \subseteq V$, then $\mu_U$ denotes the marginal of $\mu$ on $\X^U$.

As in Subsection~\ref{subs:coind}, let us write $\t{\rmh}_\S(\mu)$ for the right-hand formula in Theorem C. We must show that it equals $\rmh^\m_{\S\times \rm{T}}(\mu^{\times H})$. The proof is based on Theorem B, together with two simple lemmas.

\begin{lem}\label{lem:conv-and-conv1}
	If $\mu_n\lw \mu$ over $\S$, then $\mu_n^{\times W_n}\lde \mu^{\times H}$ over $\S\times \rm{T}$.
\end{lem}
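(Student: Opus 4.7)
The plan is to reduce LDE convergence to LE convergence by a product trick: local weak$^\ast$ convergence is trivially stable under Cartesian products of measures --- if $\mu_n \lw \mu$ then $\mu_n \times \mu_n \lw \mu \times \mu$, since the map $\nu \mapsto \nu \times \nu$ is weak$^\ast$-continuous --- so once I prove the special case that $\mu_n \lw \mu$ implies $\mu_n^{\times W_n} \stackrel{\rm{le}}{\to} \mu^{\times H}$, applying that special case with $(\mu_n\times \mu_n, \mu \times \mu)$ in place of $(\mu_n, \mu)$ on the alphabet $\X \times \X$ upgrades the conclusion to $\mu_n^{\times W_n} \lde \mu^{\times H}$. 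The task therefore reduces to proving LE convergence.

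For the local weak$^\ast$ half of LE convergence, fix finite sets $E \subseteq G$, $F \subseteq H$, and a bounded continuous cylinder function $f$ on $\X^{E \times F}$. The pullback unfolds as
\[\Pi^{\s_n \times \tau_n}_{(v,w)}(\mathbf{x})\Big|_{E \times F} = \bigl((\mathbf{x}_{\tau_n^h(w)})_{\s_n^g(v)}\bigr)_{(g,h) \in E \times F},\]
where $\mathbf{x} = (\mathbf{x}_{w'})_{w' \in W_n}$ is viewed as an element of $(\X^{V_n})^{W_n}$. By the sofic property of $\rm{T}$, the map $h \mapsto \tau_n^h(w)$ is injective on $F$ for all but $o(|W_n|)$ values of $w$; for such $w$ the blocks $(\mathbf{x}_{\tau_n^h(w)})_{h \in F}$ are genuinely i.i.d.\ copies of $\mu_n$ when $\mathbf{x} \sim \mu_n^{\times W_n}$. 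Combining this with $\mu_n \lw \mu$ --- which says $(\Pi^{\s_n}_v)_\ast \mu_n$ is close to $\mu$ in weak$^\ast$ for a $(1-o(1))$-fraction of $v$ --- shows that for a $(1-o(1))$-fraction of pairs $(v,w)$ the $E \times F$-marginal of $(\Pi^{\s_n \times \tau_n}_{(v,w)})_\ast \mu_n^{\times W_n}$ is close to $(\mu_E)^{\otimes F} = (\mu^{\times H})_{E \times F}$.

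For the good-model half, I need the empirical measure $P^{\s_n \times \tau_n}_{\mathbf{x}}$ to concentrate on $\mu^{\times H}$ under $\mathbf{x} \sim \mu_n^{\times W_n}$. The previous step already gives $\sfE \int f \, dP^{\s_n \times \tau_n}_{\mathbf{x}} \to \int f \, d\mu^{\times H}$, so what remains is a variance estimate. Set $f_{v,w}(\mathbf{x}) := f(\Pi^{\s_n \times \tau_n}_{(v,w)}(\mathbf{x}))$; then $f_{v,w}$ depends only on the sub-tuple $(\mathbf{x}_{w'})_{w' \in \tau_n^F(w)}$. Whenever $\tau_n^F(w_1) \cap \tau_n^F(w_2) = \emptyset$ the variables $f_{v_1,w_1}$ and $f_{v_2,w_2}$ are independent, since $\mathbf{x} \sim \mu_n^{\times W_n}$ has i.i.d.\ $W_n$-coordinates, and for each $w_1$ only $O(|F|^2)$ choices of $w_2$ can violate this disjointness. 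Hence for fixed $v$ the $W_n$-average $\frac{1}{|W_n|}\sum_w f_{v,w}$ has variance $O(1/|W_n|)$, and Cauchy--Schwarz preserves this bound when averaging further over $v$. Chebyshev's inequality then yields the concentration (using $|W_n| \to \infty$).

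The main obstacle is this variance estimate, because the $V_n$-average provides no independence whatsoever: all mileage must come from the $W_n$-direction, where the coordinates of $\mathbf{x}$ really are i.i.d., with the $v$-average absorbed by Cauchy--Schwarz rather than by hoping for further cancellation. Everything else is sofic-approximation bookkeeping or a direct application of the hypotheses.
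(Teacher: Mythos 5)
Your proof is correct, and the two halves compare differently with the paper. The local weak$^\ast$ half is essentially identical to Part 1 of the paper's proof: the same marginal identity $\big((\Pi^{\s_n\times \tau_n}_{(v,w)})_\ast(\mu_n^{\times W_n})\big)_{E\times F} = \big(((\Pi^{\s_n}_v)_\ast \mu_n)_E\big)^{\times F}$, valid for the $(1-o(1))$-fraction of $w$ on which $h\mapsto\tau_n^h(w)$ is injective over $F$. For the empirical half you genuinely diverge: the paper computes no variance at all. When $H$ is infinite it observes that $(\X^{G\times H},\mu^{\times H},S)$ is weakly mixing, so local weak$^\ast$ convergence already implies LDE convergence by~\cite[Corollary 5.7 and Lemma 5.15]{Aus--soficentadd}; when $H$ is finite it uses $|W_n|\to\infty$ to enlarge $H$ to $\t{H}=H\times\bbZ$ and extend $\rm{T}$ to a sofic approximation of $\t{H}$ (permuting the free $H$-orbits of $W_n$ cyclically), reduces to the infinite case, and projects back. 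Your route --- upgrading LE to LDE by running the whole argument on the alphabet $\X\times\X$ with $\mu_n\times\mu_n$, then a second-moment bound using independence of the $W_n$-coordinates, with the $v$-average absorbed by Cauchy--Schwarz and only $O(|F|^2)$ overlapping pairs per $w$ --- is precisely the ``direct, just a little messier'' alternative that the paper sketches in the remark immediately following its proof, and your estimates all check out. What each buys: your argument is self-contained and avoids importing the weak-mixing implication from~\cite{Aus--soficentadd}, and the product trick is a clean way to handle the ``doubly'' part explicitly; the paper's reduction avoids the covariance bookkeeping entirely and makes the role of weak mixing of the co-induced action transparent.
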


\begin{proof}\emph{Part 1.}\quad We first show that $\mu_n^{\times W_n} \stackrel{\rm{lw}^\ast}{\to} \mu^{\times H}$. Let $E\subseteq G$ and $F\subseteq H$ both be finite.  Once $n$ is sufficiently large, most vertices $w \in W_n$ have the property that the map $h\mapsto \tau_n^h(w)$ is injective on $F$.  For such $w$, and any $v \in V_n$, a simple calculation gives
	\[\big((\Pi^{\s_n\times \tau_n}_{(v,w)})_\ast(\mu_n^{\times W_n})\big)_{E\times F} = \big(((\Pi^{\s_n}_v)_\ast \mu_n)_E\big)^{\times F}.\]
Once $n$ is sufficiently large, most $v \in V_n$ have the property that $((\Pi^{\s_n}_v)_\ast\mu_n)_E$ is close to $\mu_E$ in the weak$^\ast$ topology, and hence the above measure is close to $\mu_E^{\times F}$.

\vspace{7pt}

\emph{Part 2.}\quad If $H$ is infinite, then the shift action of $G\times H$ on $(\X^{G\times H},\mu^{\times H})$ is weakly mixing.  In this case local weak$^\ast$ convergence implies LDE convergence~\cite[Corollary 5.7 and Lemma 5.15]{Aus--soficentadd}, so Part 1 completes the proof.

So now suppose that $H$ is finite.  In this case we must use the weaker assumption that $|W_n| \to \infty$ to complete the proof.  There are many ways to do this.  Here we use a simple trick of enlarging $H$ to the infinite group $\t{H} := H\times \bbZ$ without effectively changing the sofic approximation to $H$.

Indeed, since $H$ is finite and $\rm{T}$ is a sofic approximation to it, the sets 
\begin{multline*}
Y_n := \big\{w \in W_n:\ \tau_n^g(\tau_n^h(w)) = \tau_n^{gh}(w)\ \forall g,h \in H\\ \hbox{and}\ \tau_n^h(w) \neq w\ \forall h \in H\setminus \{e_H\}\big\}
\end{multline*}
must satisfy $|Y_n| = (1-o(1))|W_n|$.  By the definition of these sets, the restrictions $h\mapsto \tau_n^h|Y_n$ define a genuine action of $H$ on $Y_n$ for each $n$, and these actions are all free.

Since $H$ is finite but $|Y_n| \to \infty$, it follows that $Y_n$ must consist of a disjoint union of free $H$-orbits, their number tending to $\infty$ with $n$.  We may therefore choose a sequence of permutations $\theta_n:Y_n\to Y_n$ such that each $\theta_n$ commutes with the $H$-action on $Y_n$ and permutes the $H$-orbits cyclically.  Extending $\theta_n$ arbitrarily to a permutation of $W_n$, it follows that the maps
\[\t{\tau}_n:\t{H}\to \rm{Sym}(W_n):(h,k)\mapsto \tau_n^h\circ \theta_n^k\]
constitute a sofic approximation to $\t{H}$.  Let $\t{\rm{T}} := (\t{\tau}_n)_{n\geq 1}$.

Now Part 1 still applies with the new group $\t{H}$ and sofic approximation $\t{\rm{T}}$, giving that $\mu_n^{\times W_n} \lw \mu^{\times\t{H}}$ over $\S\times \t{\rm{T}}$.  Since $\t{H}$ is infinite, this implies that also $\mu_n^{\times W_n}\lde \mu^{\times \t{H}}$ over $\S\times \t{\rm{T}}$, as explained above.  Finally, applying the coordinate projection map $\X^{G\times \t{H}} \to \X^{G\times H}$ to $\mu^{\times \t{H}}$ and to the empirical measures of models chosen randomly from $\mu^{\times W_n}$, this implies that $\mu^{\times W_n}\lde \mu^{\times H}$ over $\S\times \rm{T}$.
\end{proof}

\begin{rmk}
Recalling the definitions of LDE convergence and of the weak$^\ast$ topology on measures, Part 2 of the previous proof is equivalent to proving that
\[(\mu_n^{\times W_n}\times \mu_n^{\times W_n})\Big\{(\bf{x},\bf{y}):\ \Big|\int f\,dP^{\s_n\times \tau_n}_{(\bf{x},\bf{y})} - \int f\,d(\mu^{\times H}\times \mu^{\times H})\Big| < \eps\Big\} \to 1\]
as $n\to\infty$ for any $f \in C(\X^{G\times H}\times \X^{G\times H})$ and any $\eps > 0$.  Rather than using the trick above to reduce to the case of infinite $H$, it is also easy to prove this directly, just a little messier.  First, by the Stone--Weierstrass theorem, it suffices to assume that $f$ depends on only finitely many coordinates in $\X^{G\times H}\times \X^{G\times H}$.  Having done so, the integral $\int f\,dP^{\s_n\times \tau_n}_{(\bf{x},\bf{y})}$ may be written as an average over $V_n\times W_n$ of quantities that depend only on local `patches' of the models $\bf{x}$ and $\bf{y}$.  Using the product structure of $\mu_n^{\times W_n}$, one deduces that most pairs of terms in this average are independent once $n$ is large. Then a simple appeal to Chebychev's inequality gives the convergence in probability asserted above, much as in standard proofs of the weak law of large numbers. \fin
\end{rmk}

\begin{lem}\label{lem:conv-and-conv2}
	If $\nu_n\lde \mu^{\times H}$ over $\S\times \rm{T}$, then there are subsets $Z_n \subseteq W_n$ such that
\[|Z_n| = (1-o(1))|W_n|\]
and such that, for any selection of a sequence of vertices $w_n \in Z_n$, we have
\[(\nu_n)_{V_n\times \{w_n\}} \lw \mu \quad \hbox{over}\ \S.\]
\end{lem}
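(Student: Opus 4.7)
The plan is to extract the desired convergence from LDE by taking marginals and applying a Fubini-style averaging. The first observation is purely algebraic: for any $(v,w) \in V_n \times W_n$ with $\tau_n^{e_H}(w) = w$ (which is all but $o(|W_n|)$ of the vertices, by the sofic property of $\rm{T}$), one checks directly from the definition of the pullback name that the marginal of $(\Pi^{\s_n \times \tau_n}_{(v,w)})_\ast \nu_n$ on $\X^{G \times \{e_H\}} \cong \X^G$ coincides with $(\Pi^{\s_n}_v)_\ast (\nu_n)_{V_n \times \{w\}}$, where we identify $V_n \times \{w\}$ with $V_n$. Similarly, the marginal of $\mu^{\times H}$ on $\X^{G \times \{e_H\}}$ is $\mu$.

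Next, for any weak$^\ast$ neighbourhood $U$ of $\mu$ in $\Pr(\X^G)$, the set
\[\t{U} := \{\nu \in \Pr(\X^{G\times H}):\ \nu_{G\times \{e_H\}} \in U\}\]
is a weak$^\ast$ neighbourhood of $\mu^{\times H}$. Since $\nu_n \lde \mu^{\times H}$ implies $\nu_n \lw \mu^{\times H}$, the first observation gives
\[\big|\big\{(v,w)\in V_n\times W_n:\ (\Pi^{\s_n}_v)_\ast(\nu_n)_{V_n\times \{w\}} \in U\big\}\big| = (1-o(1))|V_n\times W_n|.\]
By Markov's inequality applied to the fraction of ``bad'' $v$ as a function of $w$, if $\eta_n \to 0$ denotes the fraction of bad $(v,w)$, then the set
\[B_n(U) := \big\{w \in W_n:\ \big|\{v:\ (\Pi^{\s_n}_v)_\ast(\nu_n)_{V_n\times \{w\}} \in U\}\big| \geq (1-\sqrt{\eta_n})|V_n|\big\}\]
satisfies $|B_n(U)|/|W_n| \to 1$.

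To obtain a single set $Z_n$ working uniformly across all sequences $w_n$ and all neighbourhoods, I would fix a countable decreasing basis $U_1 \supseteq U_2 \supseteq \cdots$ of weak$^\ast$ neighbourhoods of $\mu$ (available since $\Pr(\X^G)$ is metrizable), set
\[B_{k,n} := \Big\{w\in W_n:\ \big|\{v:\ (\Pi^{\s_n}_v)_\ast(\nu_n)_{V_n\times \{w\}}\in U_k\}\big| \geq (1-1/k)|V_n|\Big\},\]
and note that $|B_{k,n}|/|W_n| \to 1$ as $n\to\infty$ for each fixed $k$ by the preceding paragraph. A standard diagonal argument then produces a sequence $k_n \to \infty$ slowly enough that $Z_n := B_{1,n}\cap \cdots \cap B_{k_n,n}$ still satisfies $|Z_n|/|W_n| \to 1$. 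For any choice $w_n \in Z_n$ and any weak$^\ast$ neighbourhood $U$ of $\mu$, choosing $k$ with $U_k \subseteq U$ and then $n$ large enough that $k_n \geq k$ shows that the fraction of $v \in V_n$ with $(\Pi^{\s_n}_v)_\ast(\nu_n)_{V_n\times \{w_n\}} \in U$ exceeds $1 - 1/k$, which is the required local weak$^\ast$ convergence.

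The argument is essentially routine once the marginal identity in the first step is noted; the only mild subtlety is that the diagonal choice of $k_n$ must be slow enough to make $Z_n$ large, yet fast enough that $k_n \to \infty$. I do not foresee any genuine obstacle here.
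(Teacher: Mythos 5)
Your proposal is correct and follows essentially the same route as the paper: the marginal identity $((\Pi^{\s_n\times\tau_n}_{(v,w)})_\ast\nu_n)_{G\times\{e_H\}} = (\Pi^{\s_n}_v)_\ast((\nu_n)_{V_n\times\{w\}})$, the local weak$^\ast$ consequence of LDE convergence, Markov's inequality in the $w$-variable, and a diagonal choice over a countable decreasing neighbourhood basis. The only cosmetic difference is that you intersect the sets $B_{1,n}\cap\cdots\cap B_{k_n,n}$ whereas the paper takes a single $B_{k_n,n}$ with a slowly decaying tolerance $\eps_n$; these are interchangeable since the basis is nested.
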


\begin{proof}
Let $U_1 \supseteq U_2 \supseteq \dots$ be a neighbourhood base for the weak$^\ast$ topology around $\mu \in \Pr(\X^G)$.  Let $e_H$ be the identity of $H$, and let us commit the slight abuse of identifying measures on $\X^{G\times \{e_H\}}$ and on $\X^G$. Since $\nu_n \lde \mu^{\times H}$, we have
\begin{align*}
&\big|\big\{(v,w) \in V_n\times W_n:\ ((\Pi^{\s_n\times \tau_n}_{(v,w)})_\ast \nu_n)_{G\times \{e_H\}} \in U_k\big\}\big|\\
&= \big|\big\{(v,w) \in V_n\times W_n:\ (\Pi^{\s_n}_v)_\ast((\nu_n)_{V_n\times \{w\}}) \in U_k\big\}\big|\\
&=\sum_{w \in W_n}\big|\big\{v \in V_n:\ (\Pi^{\s_n}_v)_\ast((\nu_n)_{V_n\times \{w\}}) \in U_k\big\}\big|\}\\
&= (1-o(1))|V_n\times W_n|
\end{align*}
for every $k \geq 1$. Therefore, by Markov's inequality, if we choose $k_1 \leq k_2 \leq \dots$ growing to $\infty$ sufficiently slowly and also $\eps_n \downarrow 0$ sufficiently slowly, then the sets
\[Z_n := \Big\{w \in W_n:\ \big|\big\{v \in V_n:\ (\Pi^{\s_n}_v)_\ast((\nu_n)_{V_n\times \{w\}}) \in U_{k_n}\big\}\big| > (1-\eps_n)|V_n|\Big\}\]
satisfy $|Z_n| = (1-o(1))|W_n|$.
\end{proof}

\begin{proof}[Proof of Theorem C]
	\emph{Part 1.}\quad Let $\P \in \frak{P}$, let $n_i \uparrow \infty$, and let $\mu_i \in \Pr(\X^{V_{n_i}})$ be a sequence which locally weak$^\ast$ converges to $\mu$ over $(\s_{n_i})_{i\geq 1}$.
	
	By Lemma~\ref{lem:conv-and-conv1}, it also holds that $\mu_i^{\times W_n}\lde \mu^{\times H}$ over ${(\s_{n_i}\times \tau_{n_i})_{i \geq 1}}$.  On the other hand, the additivity of Shannon entropy gives
	\begin{equation}\label{eq:Sh-add}
\frac{1}{|V_{n_i}|}\rmH_{\mu_i}(\P^{V_{n_i}}) = \frac{1}{|V_{n_i}\times W_{n_i}|}\rmH_{\mu_i^{\times W_{n_i}}}(\P^{V_{n_i}\times W_{n_i}}).
\end{equation}
	Taking the limit supremum as $i\to\infty$, the second formula from Theorem B turns this equality into
\[\limsup_{i\to\infty} \frac{1}{|V_{n_i}|}\rmH_{\mu_i}(\P^{V_{n_i}}) \leq \rmh^\m_{\S\times \rm{T}}(\mu^{\times H}).\]
Now taking the supremum over all such choices of $\P \in \frak{P}$, $(n_i)_{i\geq 1}$, and $(\mu_i)_{i\geq 1}$, this becomes $\t{\rmh}_\S(\mu) \leq \rmh_{\S\times \rm{T}}^\m(\mu^{\times H})$.

\vspace{7pt}

\emph{Part 2.}\quad Let $\P \in \frak{P}$, let $n_i\uparrow \infty$, and let $\nu_i \in \Pr(\X^{V_{n_i}\times W_{n_i}})$ be a sequence which LDE converges to $\mu^{\times H}$ over $(\s_{n_i}\times \tau_{n_i})_{i\geq 1}$.

Let $Z_i\subseteq W_{n_i}$ be a sequence of subsets as provided by Lemma~\ref{lem:conv-and-conv2}.  For any selection of a sequence of vertices $w_i \in Z_i$, we have
\[(\nu_i)_{V_{n_i}\times \{w_i\}} \lw \mu \quad \hbox{over}\ (\s_{n_i})_{i\geq 1},\]
and therefore
\[\limsup_{i\to\infty} \frac{1}{|V_{n_i}|}\rmH_{(\nu_i)_{V_{n_i}\times\{w_i\}}}(\P^{V_{n_i}}) \leq \t{\rmh}_\S(\mu),\]
by the definition of $\t{\rmh}_\S(\mu)$. Since we can choose the elements $w_i \in Z_i$ to maximize the Shannon entropies appearing on the left-hand side here, we must actually have
\begin{equation}\label{eq:max-ent-bd}
\limsup_{i\to\infty} \frac{1}{|V_{n_i}|}\max_{w \in Z_i}\rmH_{(\nu_i)_{V_{n_i}\times\{w\}}}(\P^{V_{n_i}}) \leq \t{\rmh}_\S(\mu).
\end{equation}

Now subadditivity and standard bounds for Shannon entropy give
\begin{align*}
&\rmH_{\nu_i}(\P^{V_{n_i}\times W_{n_i}}) \\ &\leq \sum_{w \in W_{n_i}}\rmH_{(\nu_i)_{V_{n_i}\times \{w\}}}(\P^{V_{n_i}})\\
&\leq \sum_{w \in Z_i}\rmH_{(\nu_i)_{V_{n_i}\times \{w\}}}(\P^{V_{n_i}}) + |W_{n_i}\setminus Z_i|\cdot |V_{n_i}|\cdot \log|\P|\\
&\leq |W_{n_i}|\cdot \max_{w \in Z_{n_i}}\rmH_{(\nu_i)_{V_{n_i}\times \{w\}}}(\P^{V_{n_i}}) + o(|V_{n_i}\times W_{n_i}|)\cdot \log|\P|,
\end{align*}
and so~\eqref{eq:max-ent-bd} gives
\[\limsup_{i\to\infty}\frac{1}{|V_{n_i}\times W_{n_i}|}\rmH_{\nu_i}(\P^{V_{n_i}\times W_{n_i}}) \leq \t{\rmh}_\S(\mu).\]
Taking the supremum over all choices of $\P \in \frak{P}$, $(n_i)_{i\geq 1}$, and $(\nu_i)_{i\geq 1}$, one final appeal to the second formula in Theorem B gives $\rmh_{\S\times \rm{T}}^\m(\mu^{\times H}) \leq \t{\rmh}_\S(\mu)$.
\end{proof}

Through its appeals to the second formula in Theorem B, the proof above rests crucially on the use of measures $\mu_i$ or $\nu_i$ that have the AEP.  This is what allows us to compute the sofic entropies using $\rmH_{\mu_i}(\P^{V_{n_i}})$ in place of $\cov_\eps(\mu_i;\P^{V_{n_i}})$, and so gives us access to the additivity in~(\ref{eq:Sh-add}).  Covering numbers of more general measures need not behave so well under high-dimensional Cartesian products.




\subsubsection*{Acknowledgements}

This research was supported partly by the Simons Collaboration on Algorithms and Geometry. \fin

\bibliographystyle{abbrv}
\bibliography{bibfile}

\def\cprime{$'$} \def\cprime{$'$}
\begin{thebibliography}{10}

\bibitem{Aus--soficentadd}
T.~Austin.
\newblock Additivity properties of sofic entropy and measures on model spaces.
\newblock {\em Forum Math. Sigma}, 4:e25, 79, 2016.

\bibitem{AusBur--CPSE}
T.~Austin and P.~Burton.
\newblock Uniform mixing and completely positive sofic entropy.
\newblock To appear, \emph{J. Anal. Math.}

\bibitem{Bowen--survey}
L.~Bowen.
\newblock Examples in the entropy theory of countable group actions.
\newblock Preprint, available online at \verb|arXiv.org|: 1704.06349.

\bibitem{Bowen10c}
L.~Bowen.
\newblock The ergodic theory of free group actions: entropy and the
  {$f$}-invariant.
\newblock {\em Groups Geom. Dyn.}, 4(3):419--432, 2010.

\bibitem{Bowen10}
L.~Bowen.
\newblock Measure conjugacy invariants for actions of countable sofic groups.
\newblock {\em J. Amer. Math. Soc.}, 23(1):217--245, 2010.

\bibitem{Bowen11}
L.~Bowen.
\newblock Entropy for expansive algebraic actions of residually finite groups.
\newblock {\em Ergodic Theory Dynam. Systems}, 31(3):703--718, 2011.

\bibitem{GolPin--book}
C.~M. Goldie and R.~G.~E. Pinch.
\newblock {\em Communication theory}, volume~20 of {\em London Mathematical
  Society Student Texts}.
\newblock Cambridge University Press, Cambridge, 1991.

\bibitem{GraKie80}
R.~M. Gray and J.~C. Kieffer.
\newblock Mutual information rate, distortion, and quantization in metric
  spaces.
\newblock {\em IEEE Trans. Inform. Theory}, 26(4):412--422, 1980.

\bibitem{Hay14}
B.~Hayes.
\newblock Polish models and sofic entropy.
\newblock To appear, \emph{J. Inst. Math. Jussieu}. Preprint available online
  at \verb|arXi.org|: 1411.1510.

\bibitem{Katet86}
M.~Kat\v{e}tov.
\newblock On extended {S}hannon entropies and the epsilon entropy.
\newblock {\em Comment. Math. Univ. Carolin.}, 27(3):519--534, 1986.

\bibitem{KerLi11a}
D.~Kerr and H.~Li.
\newblock Bernoulli actions and infinite entropy.
\newblock {\em Groups Geom. Dyn.}, 5(3):663--672, 2011.

\bibitem{KerLi11b}
D.~Kerr and H.~Li.
\newblock Entropy and the variational principle for actions of sofic groups.
\newblock {\em Invent. Math.}, 186(3):501--558, 2011.

\bibitem{KerLi13}
D.~Kerr and H.~Li.
\newblock Soficity, amenability, and dynamical entropy.
\newblock {\em Amer. J. Math.}, 135(3):721--761, 2013.

\bibitem{PosRod72}
E.~C. Posner and E.~R. Rodemich.
\newblock Epsilon entropy of probability distributions.
\newblock In {\em Proceedings of the {S}ixth {B}erkeley {S}ymposium on
  {M}athematical {S}tatistics and {P}robability ({U}niv. {C}alifornia,
  {B}erkeley, {C}alif., 1970/1971), {V}ol. {III}: {P}robability theory}, pages
  699--707. Univ. California Press, Berkeley, Calif., 1972.

\bibitem{PosRodRum67}
E.~C. Posner, E.~R. Rodemich, and H.~Rumsey, Jr.
\newblock Epsilon entropy of stochastic processes.
\newblock {\em Ann. Math. Statist.}, 38:1000--1020, 1967.

\bibitem{Shannon48}
C.~E. Shannon.
\newblock A mathematical theory of communication.
\newblock {\em Bell System Tech. J.}, 27:379--423, 623--656, 1948.

\bibitem{Suj80}
{\v{S}}.~{\v{S}}ujan.
\newblock Epsilon-rates, epsilon-quantiles, and group coding theorems for
  finitely additive information sources.
\newblock {\em Kybernetika (Prague)}, 16(2):105--119, 1980.

\bibitem{Ver00}
A.~M. Vershik.
\newblock Dynamic theory of growth in groups: entropy, boundaries, examples.
\newblock {\em Uspekhi Mat. Nauk}, 55(4(334)):59--128, 2000.

\bibitem{Ver04}
A.~M. Vershik.
\newblock The {K}antorovich metric: the initial history and little-known
  applications.
\newblock {\em Zap. Nauchn. Sem. S.-Peterburg. Otdel. Mat. Inst. Steklov.
  (POMI)}, 312(Teor. Predst. Din. Sist. Komb. i Algoritm. Metody. 11):69--85,
  311, 2004.

\bibitem{Wink64}
K.~Winkelbauer.
\newblock On discrete information sources.
\newblock In {\em Trans. {T}hird {P}rague {C}onf. {I}nformation {T}heory,
  {S}tatist. {D}ecision {F}unctions, {R}andom {P}rocesses ({L}iblice, 1962)},
  pages 765--830. Publ. House Czech. Acad. Sci., Prague, 1964.

\bibitem{Wink70}
K.~Winkelbauer.
\newblock On the asymptotic rate of non-ergodic information sources.
\newblock {\em Kybernetika (Prague)}, 6:127--148, 1970.

\bibitem{Wink77}
K.~Winkelbauer.
\newblock On the existence of finite generators for invertible
  measure-preserving transformations.
\newblock {\em Comment. Math. Univ. Carolinae}, 18(4):789--812, 1977.

\end{thebibliography}

\parskip 0pt
\parindent 0pt

\vspace{7pt}

%
%
%




\end{document}